\newcommand{\leqnomode}{\tagsleft@true\let\veqno\@@leqno}
\newcommand{\reqnomode}{\tagsleft@false\let\veqno\@@eqno}
\numberwithin{theorem}{section}
\numberwithin{algorithm}{section}
\DeclareMathOperator*{\argmin}{argmin} % no space, limits underneath in displays
\title{Butterfly factorization via randomized matrix-vector multiplications%
\thanks{Version of \today.}}
\author{Yang Liu$^\mathsection$\thanks{Computational Research Division, Lawrence Berkeley National Laboratory, Berkeley, CA 
		(\email{liuyangzhuan@lbl.gov}, \email{pghysels@lbl.gov}, \email{xsli@lbl.gov},).}
	\and Xin Xing\thanks{School of Mathematics, Georgia Institute of Technology, GA 
		(\email{xxing33@gatech.edu}).}
	\and Han Guo\thanks{Department of Electrical Engineering and Computer Science, University of Michigan, MI 
		(\email{hanguo@umich.edu}, \email{emichiel@umich.edu}).}
	\and Eric Michielssen\footnotemark[4]
	\and Pieter Ghysels\footnotemark[2]
	\and Xiaoye Sherry Li\footnotemark[2]
}
\begin{document}
\maketitle
\begin{abstract}
	This paper presents an adaptive randomized algorithm for computing the butterfly factorization of a $m\times n$ matrix with $m\approx n$ provided that both the matrix and its transpose can be rapidly applied to arbitrary vectors. The resulting factorization is composed of $O(\log n)$ sparse factors, each containing $O(n)$ nonzero entries. The factorization can be attained using $O(n^{3/2}\log n)$ computation and $O(n\log n)$ memory resources. The proposed algorithm applies to matrices with strong and weak admissibility conditions arising from surface integral equation solvers with a rigorous error bound, and is implemented in parallel.  
\end{abstract}

\begin{keyword}
	Matrix factorization, butterfly algorithm, randomized algorithm, integral operator.
\end{keyword}

\begin{AMS}
	15A23, 65F50, 65R10, 65R20
\end{AMS}

\section{Introduction}
Butterfly factorizations are an attractive means for compressing highly oscillatory operators arising in many scientific and engineering applications, including the integral equation-based analysis of high-frequency acoustic and electromagnetic scattering problems \cite{michielssen_multilevel_1996}, and the evaluation of Fourier integrals and transforms \cite{Candes_butterfly_2009,Lexing_SFT_2009}, spherical harmonic transforms \cite{Tygert_2010_spherical} and many other types of special function transforms \cite{Oneil_2010_specialfunction}. The butterfly factorization of a $m\times n$ matrix with $m\approx n$ exists provided that all judiciously selected submatrices, whose row and column dimensions multiply to $O(n)$, are numerically low-rank. Through recursive low-rank factorizations of these submatrices, the operator can be represented as the product of $O(\log n)$ sparse matrices, each containing $O(n)$ nonzero entries. The resulting factorization can be rapidly applied to arbitrary vectors using only $O(n\log n)$ computation and memory resources. 

Despite this favorable application cost, the cost of constructing a butterfly representation of a given operator typically scales at least as $O(n^2)$ \cite{Tygert_2010_spherical}. Fortunately, there exist two important categories of operators that allow for fast approximation by a butterfly. (i) Operators that allow each element of their matrix representation to be evaluated in O(1) operations. This is typically the case when the butterfly factorization applies directly to an oscillatory operator with an explicit formula (e.g., Fourier operators, special transforms, or discretized integral equations) or stored as a full matrix. (ii) Operators with matrix representations that can be applied to arbitrary vectors in quasi-linear, typically $O(n\log n)$, complexity. This situation typically arises when re-compressing the composition of highly-oscilatory operators, e.g., composition of Fourier integrals \cite{Hormander_1971_FIO,li_butterfly_2015}, matrix algebras for constructing discretized inverse integral operators \cite{Han_2013_butterflyLU}), or conversion to a butterfly representation from other compression formats (e.g, fast multipole methods (FMM)-like formats). 

The butterfly factorization of matrices in category (i) can be constructed using $O(n\log n)$ computation and memory resources following the low-rank decomposition of judiciously selected submatrices using uniform \cite{michielssen_multilevel_1996}, random \cite{Eric_1994_butterfly}, or Chebyshev \cite{Yingzhou_2017_IBF} proxy points. A wide variety of low-rank decompositions, including the interpolative decomposition (ID) \cite{Yingzhou_2017_IBF}, the pseudo skeleton approximation \cite{Eric_1994_butterfly}, the adaptive cross approximation \cite{Tamayo_2011_MLACA}, and singular value decomposition (SVD) \cite{li_butterfly_2015} can be used for this purpose. This type of butterfly factorization has been extended to multi-scale and multi-dimensional problems \cite{michielssen_multilevel_1996,Yingzhou_2015_MBA,Yingzhou_2017_MBF}. 

Operators in category (ii), on the other hand, post bigger challenges to fast butterfly construction algorithms. These algorithms rely on random projection-based algorithms \cite{halko_finding_2011, Liberty_2007_LR} to construct low-rank decompositions of the associated submatrices, typically resulting in higher computation and memory costs than those in category (i). That said, $O(n\log n)$ algorithms can be developed when the oscillatory operator allows for smooth phase recovery \cite{Haizhao_2018_Phase} or fast submatrix-vector multiplications (e.g., using FMM-type algorithms). Unfortunately, these requirements are not met when compressing the concatenation of several Fourier operators or when inverting integral equation operators. When the above conditions do not apply, the cost of reconstructing butterfly approximations for operators in category (ii) require $O(n^{3/2}\log n)$ and $O(n^{3/2})$ computation and memory resources via nested SVD compression \cite{li_butterfly_2015}.              
  
Recently, a $O(n^{3/2}\log n)$-computation but $O(n\log n)$-memory algorithm in Category (ii) for general butterfly compressible matrices was introduced in \cite{Han_2017_butterflyLUPEC}. Unlike the SVD-based algorithm \cite{li_butterfly_2015}  that constructs the factorization from innermost to outermost factors, this algorithm reverses the computation sequence. Consequently, the SVD-based algorithm stores information associated with all random vectors requiring $O(n^{3/2})$ storage, while the algorithm in \cite{Han_2017_butterflyLUPEC} only stores information related to subsets of the structured random vectors, thereby requiring only $O(n\log n)$ storage. 

This paper presents a new butterfly reconstruction scheme that provides a three-fold improvement on the algorithm in \cite{Han_2017_butterflyLUPEC}: (i) The new algorithm is adaptive in nature and permits fast and accurate butterfly reconstructions even when applied to matrices with weak admissibility conditions arising from the discretization of 3D surface integral equation solvers, whereas the previous algorithm results in higher computational complexity. (ii) The algorithm comes with a rigorous error bound obtained using an orthogonal projection argument that grows only weakly with matrix size. (iii) The algorithm can be deployed in parallel. Not surprisingly, the proposed algorithm represents a critical building block for constructing fast iterative and direct solvers for highly-oscillatory problems.   

%The rest of the paper is organized as follows. Section 2 elucidates the butterfly structure and its direct computation scheme. Next, the two proposed randomized schemes are described and analyzed in Section 3 and 4, respectively. Section 5 provides several numerical examples to demonstrate the accuracy and efficiency of the proposed algorithms, followed by the concluding remark in Section 6.

\section{Preliminary background}
\subsection{Notation}
We use MATLAB notation to denote entries and subblocks of matrices and vectors.
For example, $A(i, j)$ denotes the $(i,j)$th entry of matrix $A$, and $A(I, J)$ with index sets $I$ and $J$ denotes the subblock of matrix $A$ with rows and columns with indices in $I$ and $J$, respectively. Moreover, $\mathrm{diag}(A_1,\ldots,A_k)$ denotes a block diagonal matrix with $k$ diagonal blocks $A_1,\ldots,A_k$. We assume $A \in \mathbb{R}^{m\times n}$ but all the algorithms can be trivially extended to complex matrices. 

\subsection{Low-rank approximation by projection}
Given a matrix $A\in \mathbb{R}^{m\times n}$, we consider a rank-$r$ approximation of $A$ with accuracy $O(\epsilon)$ in the \textit{projection form},
\[
A = U (U^T A) + O(\epsilon),
\]
where $U$ is of dimension $m\times r$ and the symbol ``$T$'' denotes the transpose of a matrix.
The operator $U U^T$ projects all the columns of $A$ onto the $r$-dimensional subspace $\text{col}(U)$. 
The matrix $U$ can be computed via singular value or pivoted QR decompositions, or randomized methods \cite{Liberty_2007_LR}. 
We focus on using randomized methods as illustrated in \Cref{alg:matvec_random} to construct $U$.

%\begin{algorithm}
%	\caption{Randomized low-rank approximation method}
%	\label{alg:matvec_random}
%	\begin{algorithmic}
%		\REQUIRE{$A \in \mathbb{R}^{m\times n}$, rank $r$, over-sampling parameter $p$, truncation tolerance $\epsilon$}
%		\ENSURE{Approximation $A \approx UU^\dag A$ with rank at most $(r+p)$.
%		}
%		\STATE \textbf{Step 1:} Construct an $n \times (p+r)$ matrix $\Omega$ whose entries are randomly sampled following the standard normal distribution. 
%		
%		\STATE \textbf{Step 2:} Compute $W = A \Omega$. 
%		
%		\STATE \textbf{Step 3:} Compute the column-pivoted QR decomposition of $W$ with truncation tolerance $\epsilon$ as $WP = QR$, 
%		and set $U = Q$. 
%	\end{algorithmic}
%\end{algorithm}

\begin{algorithm}
	\caption{Randomized low-rank approximation method}
	\label{alg:matvec_random}
	\hspace*{\algorithmicindent} \textbf{Input:} Routine to multiply $A \in \mathbb{R}^{m\times n}$ with arbitrary matrices, rank $r$, over-sampling parameter $p$, truncation tolerance $\epsilon$. \\
	\hspace*{\algorithmicindent} \textbf{Output:} Basis matrix $U$ such that $A \approx UU^T A$ with rank at most $(r+p)$. 	
	\begin{algorithmic}[1]
		\State \textbf{Step 1:} Form a $n \times (p+r)$ random matrix $\Omega$ whose entries are random variables which are independent and identically distributed, following a normal distribution.
		
		\State \textbf{Step 2:} Compute $W = A \Omega$. 
		
		\State \textbf{Step 3:} Compute the column-pivoted QR decomposition of $W$ with truncation tolerance $\epsilon$ as $WP = QR$ where $P$ denotes the permutation, $Q$ is orthonormal, and $R$ is upper triangular. Return $U = Q$. 
	\end{algorithmic}
\end{algorithm}

This randomized method has been studied extensively in \cite{halko_finding_2011}. It can be shown that with high probability the rank-$(r+p)$ approximation constructed by \cref{alg:matvec_random} has approximation error similar to that of the optimal rank-$r$ approximation (see Theorem 10.7 of \cite{halko_finding_2011}). \cref{alg:matvec_random} has $O(mnr)$ computational cost when $A$ is stored as a full matrix. Moreover, adaptive versions of \cref{alg:matvec_random} that increase the rank estimate $r$ until the resulting low-rank approximation meets a prescribed approximation accuracy, have been developed \cite{Chris2019Adaptive}.

\section{Butterfly factorization}
We consider the butterfly factorization of a matrix $A= K(T,S)$ defined by a highly-oscillatory operator $K(\cdot,\cdot)$ and point sets $S$ and $T$. Consider the example of free-space wave interactions between 3D \textit{source} points $S$ and \textit{target} points $T$ where $S$ and $T$ are non-overlapping (weak admissibility) or well separated (strong admissibility). The entries $A_{i,j}=K(t_i, s_j) = e^{i2\pi k |t_i-s_j|} / |t_i-s_j|$ for all pairs $(t_i, s_j) \in T\times S$ represent a discretization of the 3D Helmholtz kernel with wavenumber $\kappa>0$. Let $|T| = m$ and $|S| = n$, and assume that $m=O(n)$. Other examples of highly-oscillatory operators includes Green's function operators for Helmholtz equations with nonconstant coefficients, Fourier transforms, and special function transforms. 

\subsection{Hierarchical partitioning}
The point sets $S$ and $T$ are recursively partitioned into small subsets by bisection until the finest subsets thus obtained have less than a prescribed number of points $n_0$. Many geometry or graph-based partitioning algorithms for this purpose have been studied in \cite{rebrova_2018_study}. Here, we use bisection and binary trees for simplicity, and denote the two binary trees $\mathcal{T}_S$ and $\mathcal{T}_T$. We further assume both trees $\mathcal{T}_S$ and $\mathcal{T}_T$ are complete (all levels are completely filled). 
These assumptions can be easily relaxed. 

We number the levels of $\mathcal{T}_T$ and $\mathcal{T}_S$ from the root to the leafs. 
The root node is at level $0$; its children are at level $1$, etc. 
All the leaf nodes are at level $L$. 
At each level $l$, $\mathcal{T}_T$ and $\mathcal{T}_S$ both have $2^l$ nodes. 

Let $T_\tau$ be the subset of points in $T$ corresponding to node $\tau$ in $\mathcal{T}_T$. Obviously, $T_{t} = T$ if $t$ denotes the root node. Furthermore, for any non-leaf node $\tau\in\mathcal{T}_T$ with children $\tau_1$ and $\tau_2$, $T_{\tau_1} \cup T_{\tau_2} = T_\tau$ and $T_{\tau_1} \cap T_{\tau_2} = \emptyset$. 
The same properties hold true for the partitioning of $S$, i.e., $\{S_\nu\}_{\nu\in\mathcal{T}_S}$. We assume that both $\mathcal{T}_T$ and $\mathcal{T}_S$ have the same depth $L = O(\log n)$ so that the leaf point subsets have $O(1)$ points.

\begin{figure}[htbp!]
	\begin{center}
		\begin{tabular}{c}
			\includegraphics[height=1.5in]{./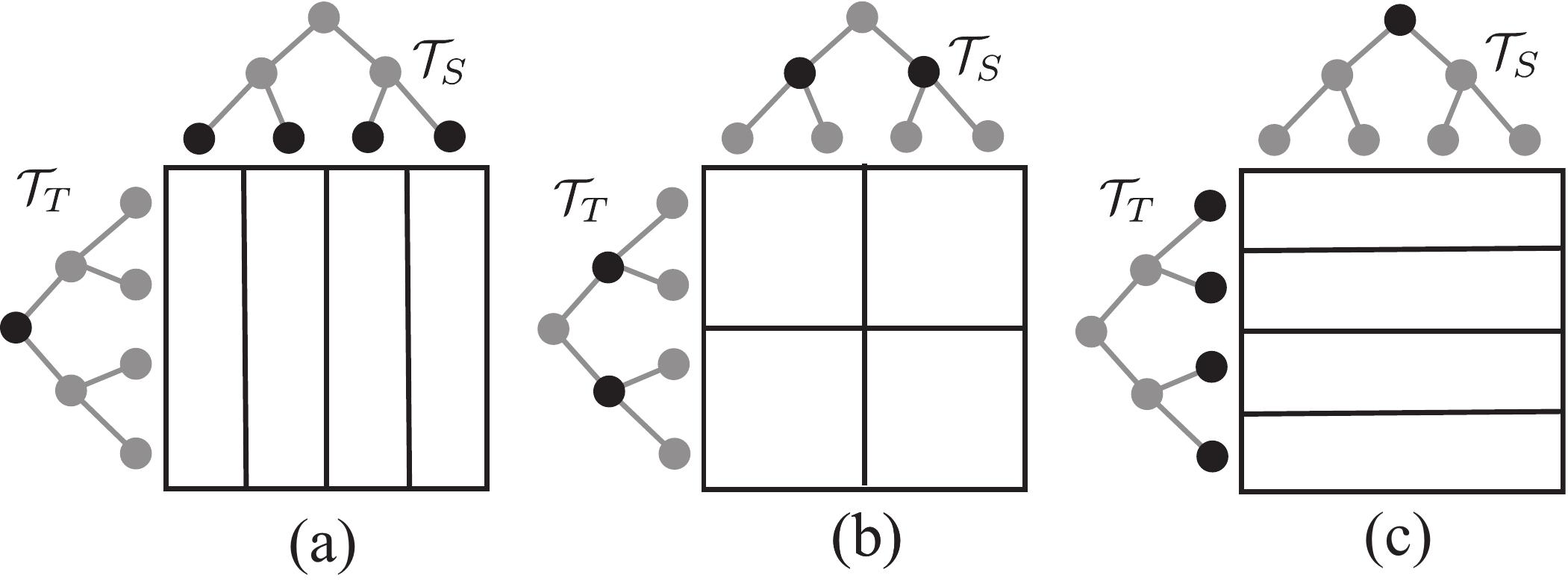}
		\end{tabular}
	\end{center}
	\caption{The partitioning trees $\mathcal{T}_S$, $\mathcal{T}_T$ and the blocks $K(T_\tau, S_\nu)$ for a 2-level butterfly factorization. The blocks correspond to level $l$ of $\mathcal{T}_T$ and level $L-l$ of $\mathcal{T}_S$ with (a) $l=0$ (b) $l=1$ (c) $l=2$.} 
	\label{fig:comp_rank}
\end{figure}

\subsection{Complementary low-rank property}
$A = K(T, S)$ satisfies the \textit{complementary low-rank property} if for any level $0\leq l\leq L$, node $\tau$ at level $l$ of $\mathcal{T}_T$ and a node $\nu$ at level $(L-l)$ of $\mathcal{T}_S$, the subblock $K(T_\tau, S_\nu)$ is numerically low-rank with rank $r_{\tau,\nu}$ bounded by a small number $r$; $r$ is called the (maximum) butterfly rank. See \cref{fig:comp_rank} for example subblocks of a 2-level butterfly. The complementary low-rank property results from the analytic properties of the Helmholtz kernel \cite{Eric_1994_butterfly,Yang_2020_BFprecondition}, and Fourier integrals \cite{Candes_butterfly_2009}, and has been observed for many other highly-oscillatory operators.

\subsection{Low-rank approximation of the blocks}
A butterfly factorization of $A$ compresses all blocks $K(T_\tau, S_\nu)$ with $l = 0, 1, \ldots, L$ for nodes $\tau$ at level $l$ of $\mathcal{T}_T$ and $\nu$ at level $(L-l)$ of $\mathcal{T}_S$ into low-rank form via a nested approach.
There exist three different but equivalent approaches: (1) column-wise butterfly factorization, (2) row-wise butterfly factorization, and (3) hybrid butterfly factorization. The proposed algorithm uses the hybrid factorization as it yields the lowest algorithmic complexity and highest parallel efficiency. That said, we will describe all three approaches as the column- and row-wise factorizations also serve as the building blocks in the hybrid factorization.  

\subsubsection{Column-wise butterfly factorization}\label{sec:column}
Each $K(T_\tau, S_\nu)$ is compressed into rank-$r$ form 
\begin{equation}\label{eqn:lowrank_block}
K(T_\tau, S_\nu) \approx U_{\tau,\nu} U_{\tau,\nu}^T K(T_\tau, S_\nu) = U_{\tau,\nu} E_{\tau,\nu},
\end{equation}
where $U_{\tau,\nu}\in \mathbb{R}^{|T_\tau| \times r}$ has orthonormal columns and is referred to as \textit{the column basis matrix} associated with $(\tau,\nu)$. The compression is performed directly for any $T_\tau$ at the leaf level $L$ of $\mathcal{T}_T$.

At a non-leaf level $l < L$, consider a node $\tau$ at level $l$ of $\mathcal{T}_T$ and a node $\nu$ at level $(L-l)$ of $\mathcal{T}_S$. 
Let $\{\tau_1, \tau_2\}$ be the children of $\tau$ at level $(l+1)$ of $\mathcal{T}_T$ and $p_\nu$ be the parent node of $\nu$ at level $(L-l-1)$ of $\mathcal{T}_S$. 
The low-rank approximation \cref{eqn:lowrank_block} of $K(T_\tau, S_\nu)$ is constructed using the low-rank approximations of blocks $K(T_{\tau_1}, S_{p_\nu})$ and $K(T_{\tau_2}, S_{p_\nu})$ at level $(l+1)$ using the nested compression approach described next.

Since $T_\tau = T_{\tau_1} \cup T_{\tau_2}$, $K(T_\tau, S_\nu)$ can be split into two blocks, i.e., 
\begin{equation}\label{eqn:split}
K(T_\tau, S_\nu) 
= 
\begin{bmatrix}
K(T_{\tau_1}, S_\nu) \\
K(T_{\tau_2}, S_\nu)
\end{bmatrix}.
\end{equation}
Meanwhile, since $S_\nu$ is a subset of $S_{p_\nu}$, it follows that $K(T_{\tau_a}, S_\nu)$, for each child $\tau_a$ of $\tau$, is a subblock of $K(T_{\tau_a}, S_{p_\nu})$. 
Thus, the low-rank approximation $K(T_{\tau_a}, S_{p_\nu}) \approx U_{\tau_a, p_\nu} E_{\tau_a, p_\nu}$ in \cref{eqn:lowrank_block} yields a low-rank approximation of $K(T_{\tau_a}, S_\nu)$ as,
\[
K(T_{\tau_a}, S_\nu) \approx U_{\tau_a, p_\nu} E_{\tau_a, \nu},
\]
where $E_{\tau_a, \nu}$ contains a subset of columns in $E_{\tau_a, p_\nu}$ corresponding to $S_\nu$. 
Substituting this approximation into \cref{eqn:split} yields 
\begin{equation}\label{eqn:nested_approx1}
K(T_\tau, S_\nu) 
\approx 
\begin{bmatrix}
U_{\tau_1, p_\nu} E_{\tau_1, \nu} \\
U_{\tau_2, p_\nu} E_{\tau_2, \nu}
\end{bmatrix}
= 
\begin{bmatrix}
U_{\tau_1, p_\nu} &  \\
& U_{\tau_2, p_\nu}
\end{bmatrix}
\begin{bmatrix}
E_{\tau_1, \nu} \\
E_{\tau_2, \nu}
\end{bmatrix}. 
\end{equation}

Instead of directly compressing $K(T_\tau, S_\nu)$, we compute a rank-$r$ approximation of the last matrix above, with $2r$ rows, far fewer than the original matrix $K(T_\tau, S_\nu)$, as 
\begin{equation}\label{eqn:nested_approx}
\begin{bmatrix}
E_{\tau_1, \nu} \\
E_{\tau_2, \nu}
\end{bmatrix} 
\approx 
R_{\tau,\nu} E_{\tau,\nu},
\end{equation}
where $R_{\tau,\nu}$ has orthonormal columns. 
Substituting \cref{eqn:nested_approx} into \cref{eqn:nested_approx1}, we obtain a rank-$r$ approximation of $K(T_\tau, S_\nu)$ as,
\[
K(T_\tau, S_\nu) 
\approx 
\begin{bmatrix}
U_{\tau_1, p_\nu} & \\
& U_{\tau_2, p_\nu}
\end{bmatrix}
R_{\tau,\nu} E_{\tau,\nu} 
= U_{\tau,\nu} E_{\tau,\nu},
\]
where the column basis matrix $U_{\tau,\nu}$ is
\begin{equation}\label{eqn:nested_basis}
U_{\tau,\nu} =
\begin{bmatrix}
U_{\tau_1, p_\nu} & \\
& U_{\tau_2, p_\nu}
\end{bmatrix}
R_{\tau,\nu},
\end{equation}
and $R_{\tau,\nu}$ is referred to as a \textit{transfer matrix}; note that $U_{\tau,\nu}$ still has orthonormal columns. 

Using \cref{eqn:nested_basis}, the basis matrices at any non-leaf level $l$ are expressed in terms of the basis matrices at level $(l+1)$ via the transfer matrices. 
Thus, the basis matrices at any non-leaf level are not explicitly formed but instead recovered recursively from quantities at lower levels. 
In the end, the butterfly factorization of $K(T, S)$ consists of the low-rank approximations of blocks at level $0$ of $\mathcal{T}_T$, i.e., 
\begin{align}
K(T, S) 
&= 
\begin{bmatrix}
K(T_t, S_{\nu_1}) & K(T_t, S_{\nu_2}) & \ldots & K(T_t, S_{\nu_{2^L}})
\end{bmatrix} 
\nonumber \\ & \approx 
\begin{bmatrix}
U_{t,\nu_1}E_{t, \nu_1} & U_{t,\nu_2}E_{t, \nu_2} & \ldots & U_{t,\nu_{2^L}} E_{t, \nu_{2^L}}
\end{bmatrix} \label{eqn:column_wise_butterfly} \\ 
& =\big(U^LR^{L-1}R^{L-2}\ldots R^{0}\big)E^0
\label{eqn:column_wise_butterfly_mat}
\end{align}
where $t$ denotes the root node of $\mathcal{T}_T$, $\{\nu_1, \nu_2, \ldots, \nu_{2^L}\}$ contains all the leafs of $\mathcal{T}_S$, and $U_{t,\nu_1}, U_{t,\nu_2}, \ldots$ are the corresponding column basis matrices. Expanding each $U_{t, \nu_a}$ using the nested form \cref{eqn:nested_basis} up to the leaf level of $\mathcal{T}_T$, $K(T, S)$ can be represented as the product of the $(L+2)$ matrices in \cref{eqn:column_wise_butterfly_mat} where $U^L=\mathrm{diag}(U_{\tau_1,s},\ldots,U_{\tau_{2^L},s})$ consists of column basis matrices at level $L$, $E^0=\mathrm{diag}(E_{t,\nu_1},\ldots,E_{t,\nu_{2^L}})$, and each factor $R^l, l=0,\ldots,L-1$ is block diagonal consisting of diagonal blocks $R_{\nu}$ for all nodes $\nu$ at level $l$ of $\mathcal{T}_S$. Each $R_\nu$ consists of $R_{\tau,\nu_1}$ and $R_{\tau,\nu_2}$ for all nodes $\tau$ at level $L-l$ of $\mathcal{T}_T$, as  
\begin{align}
R_\nu=
\begin{bmatrix}
R_{\tau_1,\nu_1} &&&&\!\!\!\!\!\!\!\!R_{\tau_1,\nu_2} &&&\\
 &\!\!\!\!\!\!\!\!R_{\tau_2,\nu_1}&&&& \!\!\!\!\!\!\!\!R_{\tau_2,\nu_2} &  & &\\
 &  & \ddots &   &  & & \ddots & \\
 &  &  & \!\!\!\!\!\!\!\!R_{\tau_{2^{L-l}},\nu_1}  &  & &  & \!\!\!\!\!\!\!\!R_{\tau_{2^{L-l}},\nu_2}
\end{bmatrix} 
\end{align}
where $\{\nu_1,\nu_2\}$ denotes the children of $\nu$. We term $U^L$ and $E^0$ outer factors and $R^l$ inner factors. Figure \ref{fig:structure}(a) shows an example of a 4-level column-wise factorization.   

\begin{figure}[htbp!]
	\begin{center}
		\begin{tabular}{c}
			\includegraphics[height=2.8in]{./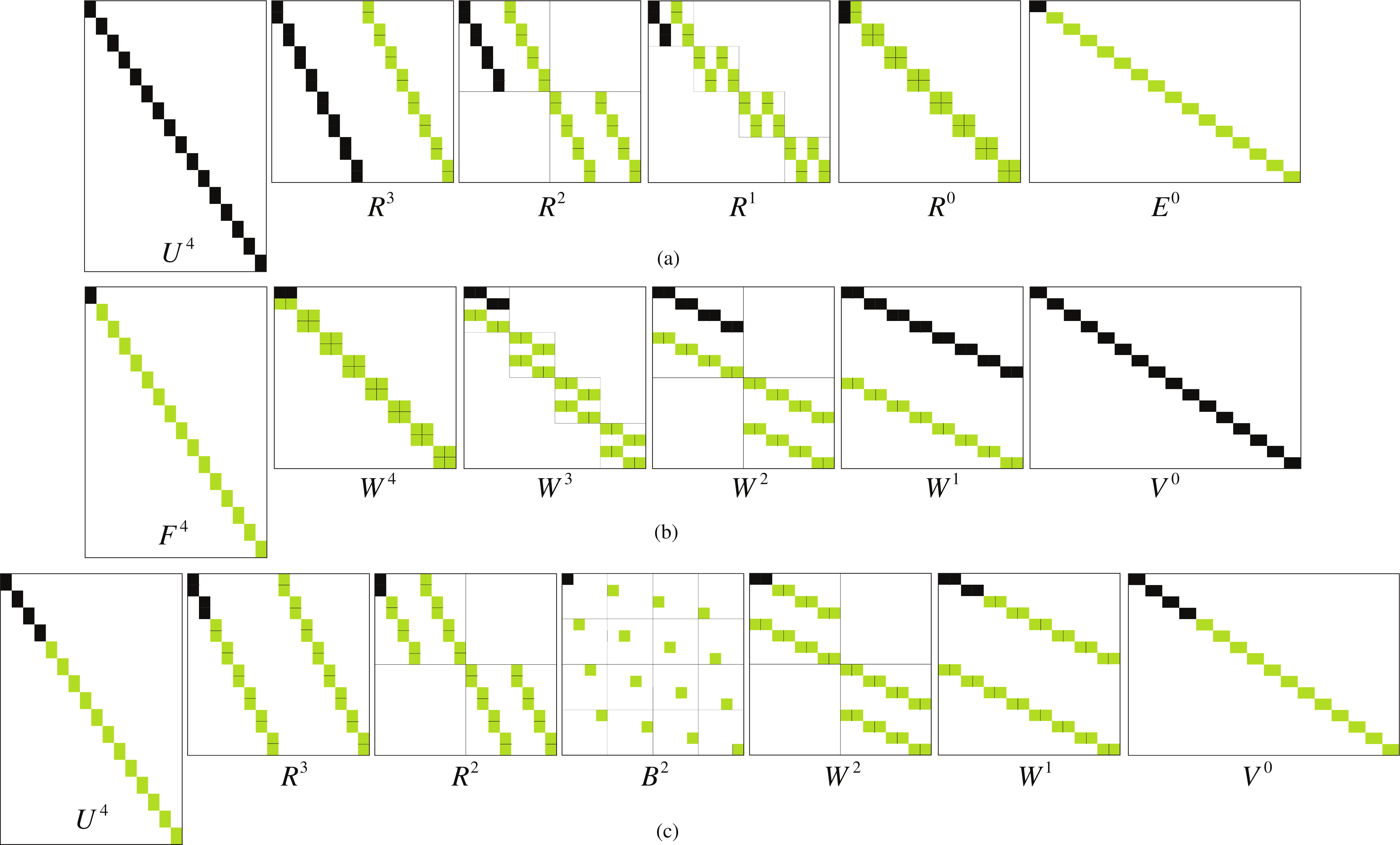}
		\end{tabular}
	\end{center}
	\caption{(a) Column-wise, (b) row-wise, and (c) hybrid factorizations for a 4-level butterfly. The blocks in black multiply to $U_{t,\nu_1}E_{t, \nu_1}$ in \cref{eqn:column_wise_butterfly}, $F_{\tau_1, s} V_{\tau_1, s}^T$, and $U_{\tau_1,\nu_1} B_{\tau_1,\nu_1} V_{\tau_1,\nu_1}^T$ in \cref{eqn:hybrid_butterfly}, respectively.}
	\label{fig:structure}
\end{figure}

\subsubsection{Row-wise butterfly factorization}\label{sec:row}
Each $K(T_\tau, S_\nu)$ is compressed into rank-$r$ form as 
\begin{equation}\label{eqn:lowrank_block_col}
K(T_\tau, S_\nu) \approx K(T_\tau, S_\nu) V_{\tau,\nu} V_{\tau,\nu}^T = F_{\tau,\nu} V_{\tau,\nu}^T,
\end{equation}
where $V_{\tau,\nu}\in \mathbb{R}^{|S_\nu| \times r}$ has orthonormal columns and is referred to as \textit{the row basis matrix} associated with $(\tau,\nu)$. Just like for the column-wise factorization, we define the transfer matrix $W_{\tau,\nu}$ for a non-leaf node $\nu$ as 
\begin{equation}\label{eqn:nested_basis_col}
V_{\tau,\nu} =
\begin{bmatrix}
V_{p_\tau, \nu_1} & \\
& V_{p_\tau, \nu_2}
\end{bmatrix}
W_{\tau,\nu}.
\end{equation}

The basis and transfer matrices can be constructed upon applying the column-wise butterfly factorization to $K(T, S)^T$, yielding the row-wise butterfly structure
\begin{align}
K(T, S)=F^L\big(W^LW^{L-1}\ldots W^1V^0\big) \label{eqn:row_wise_butterfly_mat}
\end{align} 

Here the outer factors are $F^L=\mathrm{diag}(F_{\tau_1,s},\ldots,F_{\tau_{2^L},s})$ with $s$ denoting the root of $\mathcal{T}_S$ and $V^0=\mathrm{diag}(V^T_{t,\nu_1},\ldots,V^T_{t,\nu_{2^L}})$, and the block-diagonal inner factors $W^l, l=1,\ldots,L$ have blocks $W_{\tau}$ for all nodes $\tau$ at level $l-1$ of $\mathcal{T}_T$. Each $W_\tau$ consists of $W_{\tau_1,\nu}$ and $W_{\tau_2,\nu}$ for all nodes $\nu$ at level $L-l+1$ of $\mathcal{T}_S$, as  
\begin{align}
W_\tau=
\begin{bmatrix}
W_{\tau_1,\nu_1} &&&&\\
&W_{\tau_1,\nu_2}&&&\\
&  & \ddots &    \\
&  &  & W_{\tau_1,\nu_{2^{L-l+1}}} \\
W_{\tau_2,\nu_1} &&&&\\
&W_{\tau_2,\nu_2}&&&\\
&  & \ddots &    \\
&  &  & W_{\tau_2,\nu_{2^{L-l+1}}} \\
\end{bmatrix} .
\end{align}
Figure \ref{fig:structure}(b) shows an example of a 4-level row-wise factorization.

\subsubsection{Hybrid butterfly factorization}\label{sec:hybrid}

At any level $l$ of $\mathcal{T}_T$, $K(T_\tau, S_\nu)$ with all nodes $\tau$ at level $l$ of $\mathcal{T}_T$ and nodes $\nu$ at level $(L-l)$ of $\mathcal{T}_S$  (referred to as the blocks at level $l$) form a non-overlapping partitioning of $K(T, S)$. 
Fixing $l$, we can combine the computed row and column basis matrices $U_{\tau,\nu}$, $V_{\tau,\nu}$ from both the column-wise and row-wise butterfly factorizations of $K(T, S)$ above to compress $K(T_\tau, S_\nu)$ as 
\[
K(T_\tau, S_\nu) \approx U_{\tau,\nu} U_{\tau,\nu}^T K(T_\tau, S_\nu) V_{\tau,\nu} V_{\tau,\nu}^T = U_{\tau,\nu} B_{\tau,\nu} V_{\tau,\nu}^T.
\]
The hybrid butterfly factorization of $K(T, S)$ is constructed as, 
\begin{align}
K(T, S) 
& =
\begin{bmatrix}
K(T_{\tau_1}, S_{\nu_1}) & K(T_{\tau_1}, S_{\nu_2}) & \cdots & K(T_{\tau_1}, S_{\nu_q}) \\
K(T_{\tau_2}, S_{\nu_1}) & K(T_{\tau_2}, S_{\nu_2}) & \cdots & K(T_{\tau_2}, S_{\nu_q}) \\
\vdots & \vdots & \ddots & \vdots \\
K(T_{\tau_p}, S_{\nu_1}) & K(T_{\tau_p}, S_{\nu_2}) & \cdots & K(T_{\tau_p}, S_{\nu_q})
\end{bmatrix} 
\nonumber \\ & \approx 
\begin{bmatrix}
U_{\tau_1,\nu_1} B_{\tau_1,\nu_1} V_{\tau_1,\nu_1}^T & U_{\tau_1,\nu_2} B_{\tau_1,\nu_2} V_{\tau_1,\nu_2}^T  & \cdots & U_{\tau_1,\nu_q} B_{\tau_1,\nu_q} V_{\tau_1,\nu_q}^T \\
U_{\tau_2,\nu_1} B_{\tau_2,\nu_1} V_{\tau_2,\nu_1}^T & U_{\tau_2,\nu_2} B_{\tau_2,\nu_2} V_{\tau_2,\nu_2}^T  & \cdots & U_{\tau_2,\nu_q} B_{\tau_2,\nu_q} V_{\tau_2,\nu_q}^T \\
\vdots & \vdots & \ddots & \vdots \\
U_{\tau_p,\nu_1} B_{\tau_p,\nu_1} V_{\tau_p,\nu_1}^T & U_{\tau_p,\nu_2} B_{\tau_p,\nu_2} V_{\tau_p,\nu_2}^T  & \cdots & U_{\tau_p,\nu_q} B_{\tau_p,\nu_q} V_{\tau_p,\nu_q}^T 
\end{bmatrix} \label{eqn:hybrid_butterfly}\\
& = \big(U^LR^{L-1}R^{L-2}\ldots R^{l}\big)B^{l}\big(W^lW^{l-1}\ldots W^1V^0\big)
\label{eqn:hybrid_butterfly_mat}
\end{align}
where $\tau_1, \tau_2, \ldots, \tau_p$ are the $p=2^l$ nodes at level $l$ of $\mathcal{T}_T$,  and $\nu_1, \nu_2, \ldots, \nu_q$ are the $q=2^{L-l}$ nodes at level $(L-l)$ of $\mathcal{T}_S$. 

In this level-$l$ hybrid butterfly factorization, these column basis matrices $U_{\tau,\nu}$ are recursively defined as in \cref{eqn:nested_basis} using column basis and transfer matrices in the lower levels of $\mathcal{T}_T$, while the row basis matrices $V_{\tau,\nu}$ are recursively defined as in \cref{eqn:nested_basis_col} using row basis and transfer matrices at upper levels of $\mathcal{T}_T$. In \cref{eqn:hybrid_butterfly}, the outer factors $U^L,V^0$ and inner factors $R^a,W^a$ are defined in Sections \ref{sec:column} and \ref{sec:row}, and the inner factor $B^l$ consists of blocks $B_{\tau,\nu}$ at level $l$ in \cref{eqn:hybrid_butterfly_mat}. Typically, the level $l$ is set to $l_m = \lfloor L/2\rfloor$. Figure \ref{fig:structure}(c) shows a hybrid factorization with $L=4$ and $l_m=2$.

\begin{figure}[htbp!]
	\begin{center}
		\begin{tabular}{c}
			\includegraphics[height=3in]{./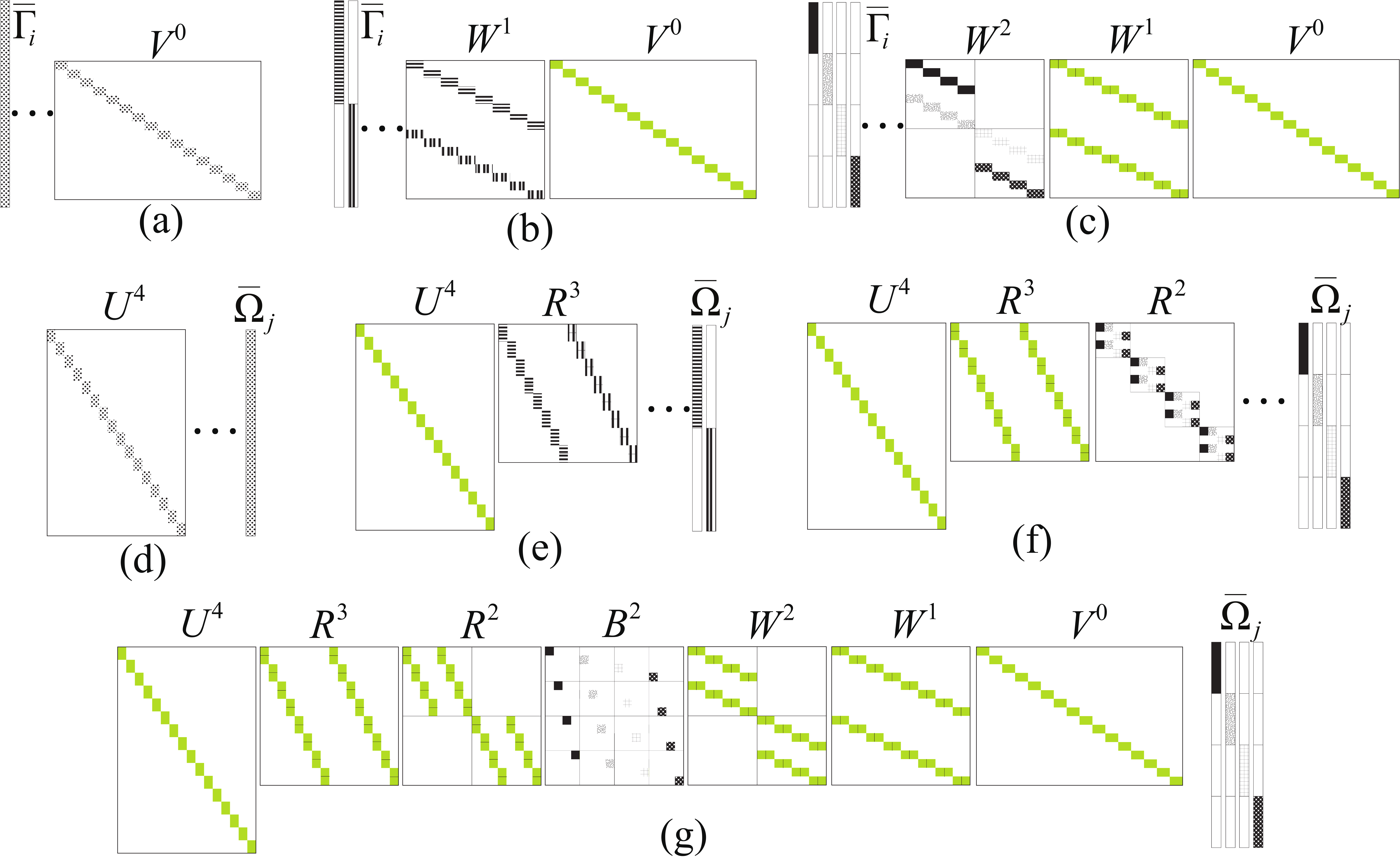}
		\end{tabular}
	\end{center}
	\caption{A 4-level hybrid factorization based on matrix-vector products consists of steps that compute (a) $V^0$, (b) $W^0$, (c) $W^1$, (d) $U^4$, (e) $R^3$, (f) $R^2$, and (g) $B^2$. Note that the vectors $\bar{\Omega}_\nu$ or $\bar{\Gamma}_\tau$ and the blocks being computed are marked with the same texture. The already-computed blocks needed at each step are plotted in Green.} 
	\label{fig:update_order}
\end{figure}

\section{Adaptive butterfly factorization  via randomized matrix-vector products}
We propose an algorithm for constructing the butterfly factorization of a matrix $A=K(T,S)$ using only products of $A$ and its transpose with random vectors. The proposed \cref{alg:matvec_butterfly} returns a hybrid factorization with prescribed accuracy $\epsilon$ assuming black-box matrix-vector multiplications. With minor modifications, \cref{alg:matvec_butterfly} also applies to column- and row-wise factorizations albeit with a much higher computational cost. In what follows, we describe the four key components of the algorithm, including
\begin{itemize}
	\item The computation of $K(T_\tau, S_\nu)\Omega_\nu$ and $K(T_\tau, S_\nu)^T\Gamma_\tau$ with random matrices $\Omega_\nu$ and $\Gamma_\tau$ using a black-box routine.	
	\item The construction of column basis matrices $U_{\tau,\nu}$ (or transfer matrices $R_{\tau,\nu}$ for non-leaf nodes $\tau$ in $\mathcal{T}_T$) based on matrix-vector multiplications involving $K(T, S)$.
	\item The construction of row basis matrices $V_{\tau,\nu}$ (or transfer matrices $W_{\tau,\nu}$ for non-leaf nodes $\nu$ in $\mathcal{T}_S$) based on the matrix-vector multiplications involving $K(T, S)^T$.
	\item The construction of intermediate matrices $B_{\tau_a,\nu_b}$ in \cref{eqn:hybrid_butterfly}.
\end{itemize}

\begin{algorithm}
	\caption{Adaptive and randomized hybrid butterfly factorization based on matrix-vector multiplication}
	\label{alg:matvec_butterfly}
	\hspace*{\algorithmicindent} \textbf{Input:} Black-box routine for multiplying $K(T,S) \in \mathbb{R}^{m\times n}$ and its transpose with arbitrary matrices, over-sampling parameter $p$, truncation tolerance $\epsilon$, initial rank guess $r_0$, binary partitioning trees $\mathcal{T}_S$ and $\mathcal{T}_T$ of $L$ levels. \\
	\hspace*{\algorithmicindent} \textbf{Output:} $K(T,S)\approx (U^LR^{L-1}R^{L-2}\ldots R^{l_m})B^{l_m}(W^{l_m}W^{l_m-1}\ldots W^1V^0)$ with $l_m=\lfloor L/2\rfloor$. 	
	\begin{algorithmic}[1]
		\State $r=r_0$.
		\While{not conerged}\Comment{Adaptive computation of $V_{\tau,\nu}$ at level $0$}\label{line:loopV}
		\State Form a random matrix $\Gamma_t\in \mathbb{R}^{|T_t|\times (r+p)}$ for the root node $t$ of $\mathcal{T}_T$.
		\State Compute $K(T_t,S)^T\Gamma_t$. \label{line:matvecV}
		\For{$\nu$ at level $L$ of $\mathcal{T}_S$}
		\State Apply \cref{alg:matvec_random} with $A=K(T_t,S_\nu)^T$ to compute $V_{t,\nu}$.
		\EndFor
		\State Converge if $r>\max\limits_{\nu} \{r_{t,\nu}\}$. \label{line:Vstop}\Comment{Over all nodes $\nu$ at leaf level of $\mathcal{T}_S$}
		\State $r\leftarrow 2r$.
		\EndWhile
		
		\While{not conerged}\Comment{Adaptive computation of $U_{\tau,\nu}$ at level $L$}\label{line:loopU}
		\State Form a random matrix $\Omega_s\in \mathbb{R}^{|S_s|\times (r+p)}$ for the root node $s$ of $\mathcal{T}_S$.
		\State Compute $K(T,S_s)\Omega_s$. \label{line:matvecU} 
		\For{$\tau$ at level $L$ of $\mathcal{T}_T$}
		\State Apply \cref{alg:matvec_random} with $A=K(T_\tau,S_s)$ to compute $U_{\tau,s}$.
		\EndFor
		\State Converge if $r>\max\limits_{\tau} \{r_{\tau,s}\}$. \label{line:Ustop}\Comment{Over all nodes $\tau$ at leaf level of $\mathcal{T}_T$}
		\State $r\leftarrow 2r$.
		\EndWhile

		\For{$l=1$ to $l_m$}\Comment{Computation of $W_{\tau,\nu}$}
		\For{$\tau$ at level $l$ of $\mathcal{T}_T$}
		\State Form a random matrix $\Gamma_\tau\in \mathbb{R}^{|T_\tau|\times (r+p)}$ with $r=\max\limits_{\nu} \{r_{p_\tau,\nu_1}+r_{p_\tau,\nu_2}\}$. \label{line:rGamma}\Comment{Over all nodes $\nu$ at level $L-l$ of $\mathcal{T}_S$}
		\State Compute $K(T_\tau,S)^T\Gamma_\tau$. \label{line:matvecW} 
		\For{$\nu$ at level $L-l$ of $\mathcal{T}_S$}
		\State Compute $\begin{bmatrix}
		V_{p_\tau, \nu_1}^T & \\
		& V_{p_\tau, \nu_2}^T 
		\end{bmatrix}K(T_\tau,S_\nu)^T\Gamma_\tau$, apply \cref{alg:matvec_random} for $W_{\tau,\nu}$. \label{line:W_compute}
		\EndFor				
		\EndFor
		\EndFor
		
		\For{$l=L-1$ to $l_m$}\Comment{Computation of $R_{\tau,\nu}$ and $B_{\tau,\nu}$}
		\For{$\nu$ at level $L-l$ of $\mathcal{T}_S$}
		\State Form a random matrix $\Omega_\nu\in \mathbb{R}^{|S_\nu|\times (r+p)}$ with $r=\max\limits_{\tau} \{r_{\tau_1,p_\nu}+r_{\tau_2,p_\nu}\}$.\label{line:rOmega}\Comment{Over all nodes $\tau$ at level $l$ of $\mathcal{T}_T$}
		\State Compute $K(T,S_\nu)\Omega_\nu$.\label{line:matvecR} 
		\State Compute $V^T_{\tau,\nu}\Omega_\nu$ if $l=l_m$. \Comment $V^T_{\tau,\nu}$ is not explicitly computed. \label{line:matvecHalf} 
		\For{$\tau$ at level $L-l$ of $\mathcal{T}_T$}
		\State Compute $\begin{bmatrix}
		U_{\tau_1, p_\nu}^T & \\
		& U_{\tau_2, p_\nu}^T 
		\end{bmatrix}K(T_\tau,S_\nu)\Omega_\nu$, apply \cref{alg:matvec_random} for $R_{\tau,\nu}$. \label{line:R_compute}
		\State Compute $B_{\tau,\nu}$ using \cref{eqn:Bij} if $l=l_m$.
		\EndFor				
		\EndFor
		\EndFor
	\end{algorithmic}
\end{algorithm}

\subsection{Multiplication of $K(T_\tau, S_\nu)$ and $K(T_\tau, S_\nu)^T$ by random matrices}\label{sec:matvec}
Here we assume the existence of a black-box program to perform matrix-vector multiplications involving $K(T, S)$ and $K(T, S)^T$. 
To use \cref{alg:matvec_random} to compute the column/row basis matrices for each block $K(T_\tau, S_\nu)$, we multiply $K(T, S)$ or $K(T, S)^T$ with \textit{structured random matrices} $\bar{\Omega}_\nu$ and $\bar{\Gamma}_\tau$ to obtain the matrices $K(T_\tau, S_\nu) \Omega_\nu$ and $K(T_\tau, S_\nu)^T \Gamma_\tau$ that are used in \cref{alg:matvec_random}. As we shall see later, $\Omega_\nu$ and $\Gamma_\tau$ are sub-vectors of $\bar{\Omega}_\nu$ and $\bar{\Gamma}_\tau$. Their entries are random variables which are independent and identically distributed, following a normal distribution. 

Fixing a node $\nu$ at level $(L-l)$ of $\mathcal{T}_S$, we compute $K(T_\tau, S_\nu)\Omega_\nu$ with a $|S_\nu|\times (r+p)$ matrix $\Omega_\nu$ for all nodes $\tau$ at level $l$ of $\mathcal{T}_T$, by multiplying $K(T, S)$ with a sparse $|S|\times (r+p)$ matrix $\bar{\Omega}_\nu$ whose only non-zero entries $\Omega_\nu$ are located on the rows corresponding to $S_\nu$. To evaluate all the multiplications $K(T_\tau, S_\nu)\Omega_\nu$ at level $l$ (of $\mathcal{T}_T$), $2^{(L-l)} (r+p)$ matrix-vector multiplications by $K(T,S)$ are needed. 

Similarly, fixing a node $\tau$ at level $l$ of $\mathcal{T}_T$, 
we compute $K(T_\tau, S_\nu)^T \Gamma_\tau$ with a $|T_\tau|\times (r+p)$ matrix $\Gamma_\tau$ for all nodes $\nu$ at level $(L-l)$ of $\mathcal{T}_S$, by multiplying $K(T, S)^T$ with a sparse matrix $\bar{\Gamma}_\tau \in \mathbb{R}^{|T|\times (r+p)}$ whose only non-zero entries $\Gamma_\tau$ are located on the rows corresponding to $T_\tau$. To evaluate all the multiplications $K(T_\tau, S_\nu)^T \Gamma_\tau$ at level $l$ (of $\mathcal{T}_T$), 
$2^l(r+p)$ matrix-vector multiplications by $K(T, S)^T$ are needed. In \cref{alg:matvec_butterfly}, the products $K(T_\tau, S_\nu) \Omega_\nu$ and $K(T_\tau, S_\nu)^T \Gamma_\tau$ are performed on lines \ref{line:matvecV}, \ref{line:matvecW}, \ref{line:matvecU}, \ref{line:matvecR}. 

\subsection{Computation of $U_{\tau,\nu}$ and $R_{\tau,\nu}$}\label{sec:compute_R}
For each leaf node $\tau$ at level $L$ of $\mathcal{T}_T$ and the root node $s$ of $\mathcal{T}_S$, 
the column basis matrix $U_{\tau, s}$ can be directly computed using \cref{alg:matvec_random} by evaluating $K(T_\tau, S_s) \Omega_s$. In \cref{alg:matvec_butterfly} (line \ref{line:loopV}), the ranks of $U_{\tau,s}$ are determined by adaptively doubling the size of the random matrices $\Omega_s$ for $U_{\tau,s}$ until convergence. The iteration is terminated if the rank estimate $r$ exceeds the maximum revealed rank $\max\limits_{\nu} \{r_{t,\nu}\}$ (see line \ref{line:Ustop}). This heuristic stopping criterion is used as a more rigorous criterion requires expensive computation of the approximation error.   

For each non-leaf node $\tau$ at level $l_m\leq l < L$ of $\mathcal{T}_T$ and each node $\nu$ at level $(L-l)$ of $\mathcal{T}_S$, 
the matrix to be compressed in \cref{eqn:nested_approx1} when computing $R_{\tau,\nu}$ can be expressed as, 
\[
\begin{bmatrix}
E_{\tau_1, \nu}\\
E_{\tau_2, \nu}
\end{bmatrix}
= 
\begin{bmatrix}
U_{\tau_1, p_\nu}^T & \\
& U_{\tau_2, p_\nu}^T 
\end{bmatrix}
K(T_\tau, S_\nu)
\approx R_{\tau,\nu} E_{\tau,\nu}. 
\]
Thus, $R_{\tau,\nu}$ can be computed via \cref{alg:matvec_random} using the products
\[
\begin{bmatrix}
U_{\tau_1, p_\nu}^T & \\
& U_{\tau_2, p_\nu}^T 
\end{bmatrix}
K(T_\tau, S_\nu)\Omega_\nu. 
\]
Note that no rank adaptation is needed for $R_{\tau,\nu}$ in \cref{alg:matvec_butterfly} as the rank $r_{\tau,\nu}$ is bounded by $r_{\tau_1,p_\nu}+r_{\tau_2,p_\nu}$. The dimensions of the random matrices $\Omega_\nu$ therefore are chosen using the rank estimate $r=\max\limits_{\nu} \{r_{\tau_1,p_\nu}+r_{\tau_2,p_\nu}\}$ on line \ref{line:rOmega}. This process recursively traverses $\mathcal{T}_T$ from the leafs to center level $l_m$ and $\mathcal{T}_S$ from the root to center level $l_m$.

\subsection{Computation of $V_{\tau,\nu}$ and $W_{\tau,\nu}$}\label{sec:compute_W}
The computation of $V_{\tau,\nu}$ and $W_{\tau,\nu}$ resembles the above computation of $U_{\tau,\nu}$ and $R_{\tau,\nu}$, but uses the multiplication results $K(T_\tau, S_\nu)^T \Gamma_\tau$. $V_{\tau,\nu}$ for leaf nodes is computed adaptively on line \ref{line:loopU} while $W_{\tau,\nu}$ for non-leaf nodes is computed using \cref{alg:matvec_random} using the multiplication results
\[
\begin{bmatrix}
V_{p_\tau, \nu_1}^T & \\
& V_{p_\tau, \nu_2}^T 
\end{bmatrix}
K(T_\tau, S_\nu)^T\Gamma_\tau. 
\] 
where the dimensions of the random matrices $\Gamma_\tau$ are chosen using the rank estimate on line \ref{line:rGamma} without adaptation. This recursive construction starts from the leaf level of $\mathcal{T}_S$ and ends at center level $l_m$.

\subsection{Computation of $B_{\tau,\nu}$ at level $l_m$}
It follows from \cref{eqn:hybrid_butterfly} that for each node $\tau$ at level $l_m$ of $\mathcal{T}_T$ and node $\nu$ at level $(L-l_m)$ of $\mathcal{T}_S$, $K(T_\tau, S_\nu)$ is approximated as 
\[
K(T_\tau, S_\nu) \approx U_{\tau,\nu} B_{\tau, \nu} V_{\tau, \nu}^T. 
\]
Using the existing multiplication results $K(T_\tau, S_\nu) \Omega_\nu$, 
$B_{\tau, \nu}$ can be estimated as
\begin{align}
B_{\tau, \nu} 
&= 
\argmin_{B\in \mathbb{R}^{r_{\tau,\nu} \times r_{\tau,\nu}}} \| \left(K(T_\tau, S_\nu) \Omega_\nu\right) - U_{\tau,\nu} B V_{\tau,\nu}^T \Omega_\nu\|_F
\nonumber\\
& = 
U_{t,\nu}^T \left(K(T_t, S_\nu) \Omega_\nu\right)  (V_{\tau,\nu}^T \Omega_\nu)^T.\label{eqn:Bij}
\end{align}

Note that \cref{alg:matvec_butterfly} computes and stores $K(T_\tau, S_\nu) \bar{\Omega}_\nu$ (or $K(T_\tau, S_\nu)^T \bar{\Gamma}_\tau$) only for one $\Omega_\nu$ at a time, therefore the algorithm is memory efficient. As an example, \cref{fig:update_order} illustrates the procedure for constructing a 4-level hybrid factorization. Note that the random matrices for the inner factors are structured.

\subsection{Cost Analysis}
Let $c(n)$ denote the number of operations for the black-box multiplication of $K(S,T)\Omega$ or $K(T,S)^T\Omega$ for an arbitrary $n\times 1$ vector $\Omega$. In the best-case scenario, $c(n)=O(n\log n)$ as $K(T,S)$ is typically stored in a compressed form using $O(n\log n)$ storage units; in the worst-case scenario, $c(n)=O(n^2)$ when the matrix is explicitly stored in full. In what follows, we assume $c(n)=O(n\log n)$. Let $r=\max_{\tau,\nu}\{r_{\tau,\nu}\}$ denote the maximum butterfly rank. Here we analyze the computation and memory costs of \cref{alg:matvec_butterfly} when applied to two classes of butterfly-compressible matrices: (i) $r$ is constant (up to a logarithmic factor). (ii) $r=O(n^{1/4})$.       

\subsubsection{$r$ is constant}\label{sec:constr}
This case typically occurs when the matrix arises from the discretization of 2D surface integral equations exploiting strong or weak admissibility conditions \cite{Han_2013_butterflyLU,Yang_2017_HODBF,Yang_2020_BFprecondition}, 3D surface integral equation solvers using strong admissibility \cite{Han_2017_butterflyLUPEC}, low-dimensional Fourier operators \cite{li_butterfly_2015}, etc. For example, \cref{fig:2d3d}(a) shows the center-level partitioning of a 2D curve used in surface integral-based Helmholtz equation solvers in which all blocks have constant rank except for $O(1)$ ones with rank $O(\log n)$. (see \cite{Yang_2020_BFprecondition} for a proof). 

As described in \cref{sec:matvec}, there are $2^{l} (r+p)$ black-box matrix-vector multiplications by $K(T,S)^T$ at level $l=0,\ldots,l_m$ and $2^{(L-l)} (r+p)$ black-box matrix-vector multiplications by $K(T,S)$ at level $l=L,\ldots,l_m$. Therefore the black-box multiplications require a total of $2(r+p)(1+2+\ldots+2^{l_m})c(n)=O(rn^{1/2}c(n))=O(rn^{3/2}\log n)$ operations. It is worth noting that the multiplications on lines \ref{line:W_compute} and \ref{line:R_compute} only involve partial factors $V_{p_\tau, \nu_a}$ and $U_{\tau_a, p_\nu}$ and their computational cost is dominated by that of the black-box multiplications. In addition, the algorithm only stores multiplication results for each random matrix of dimensions $n\times(r+p)$ and the computed butterfly factors. The computation and memory costs of \cref{alg:matvec_butterfly} therefore scale as $O(n^{3/2}\log n)$ and $O(n\log n)$, respectively. 

\subsubsection{$r$ is $O(n^{1/4})$}\label{sec:incr}
This case often results from discretizing 3D surface integral equations using weak admissibility. For example, \cref{fig:2d3d}(b) shows the center-level partitioning of a 3D surface used in surface integral methods for Helmholtz equations. Out of the $16\times 16=256$ center-level blocks of size $O(n^{1/2})\times O(n^{1/2})$, only $4$ blocks have rank $O(n^{1/4})$ representing interactions between adjacent pairs. As the adjacent pair is typically co-planar, the edge shared by the pair can serve as the proxy surface that represent interactions between the pair. Therefore the rank is bounded by the edge dimension $O(n^{1/4})$. We claim, without proof, that except for $O(n^{1/4})$ blocks with rank at most $O(n^{1/4})$ at each level, the rest has constant rank. 

We only need to focus on the construction of blocks with non-constant ranks as the rest has been analyzed above. We first analyze the cost of black-box multiplications on lines \ref{line:matvecV} and \ref{line:matvecW}. For all blocks at level $0\leq l\leq l_m$, denote $N_l = \{\tau | r_{\tau,\nu}~non~constant\}$. It can be verified that $|N_l|=O(2^{\lfloor l/2\rfloor})$ and each $\tau \in N_l$ requires $O(r+p)=O(n^{1/4})$ matrix-vector multiplications. Therefore the black-box multiplications $K(T_\tau,S)^T\Gamma_\tau$ require a total of $\sum_{l=0}^{l_m}|N_l|O(n^{1/4})O(n\log n)=O(n^{1/2}c(n))=O(n^{3/2}\log n)$ operations. A similar number of operations is required for $K(T,S_\nu)\Omega_\nu$ on line \ref{line:matvecU} and \ref{line:matvecR}. In addition, since each computed factor has $O(n^{1/4})$ blocks of rank $O(n^{1/4})$, the storage cost is $O(n^{1/4})O(n^{1/2})L\leq O(n\log n)$. Overall the computation and memory costs of \cref{alg:matvec_butterfly} are of the same orders as those in the case of constant $r$. 

\begin{figure}[htbp!]
	\begin{center}
		\begin{tabular}{c}
			\includegraphics[height=1.5in]{./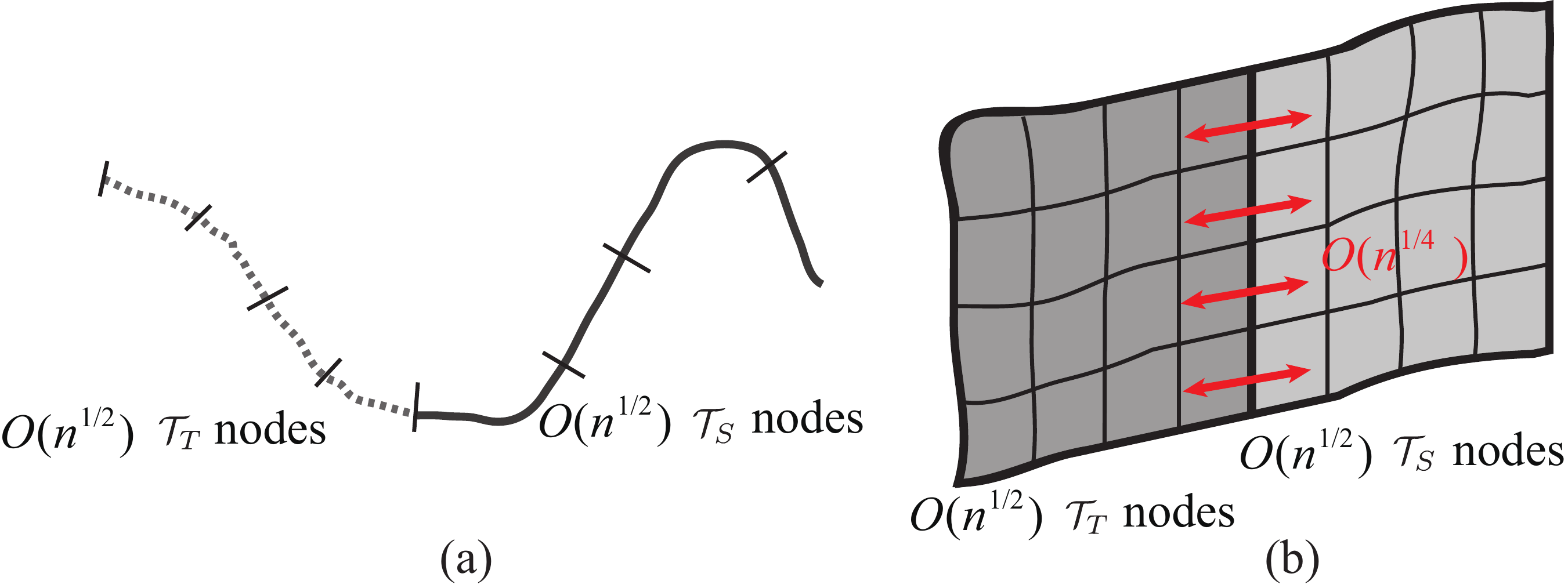}
		\end{tabular}
	\end{center}
	\caption{(a) Center-level geometrical partitioning of a 2D curve for a 4-level butterfly factorization of a weak-admissible matrix. The ranks for all center-level blocks are bounded by $O(\log n)$. (b) Center-level geometrical partitioning of a 3D surface for a 8-level butterfly factorization of a weak-admissible matrix. There are $O(n^{1/4})$ center-level blocks with rank $O(n^{1/4})$ (with the interaction pairs denoted by the red arrows)} 
	\label{fig:2d3d}
\end{figure}

\section{Error analysis}
In what follows, we assume that \cref{alg:matvec_random} used in \cref{alg:matvec_butterfly} computes a low-rank approximation satisfying $\|A - U U^T A\|_F \leqslant \epsilon \|A\|_F$ given a matrix $A$ (see \cite{halko_finding_2011}). 

\subsection{Approximation error for the nested basis $U_{\tau,\nu}$}
Given any level $l_m\leq l\leq L$ of $\mathcal{T}_T$, matrix $K(T, S)$ is partitioned into blocks $\{K(T_\tau, S_\nu)\}$
with $\tau$ at level $l$ of $\mathcal{T}_T$ and $\nu$ at level $(L-l)$ of $\mathcal{T}_S$, and $K(T_\tau, S_\nu)$ is approximated by $U_{\tau,\nu} U_{\tau,\nu}^T K(T_\tau, S_\nu)$ where $U_{\tau,\nu}$ is the nested basis computed in \cref{sec:compute_R}. \Cref{thm:column_wise} states that this approximation for all blocks at level $l$ has an error bounded by $\epsilon\sqrt{L-l+1}\|A\|_F$. 
\begin{theorem}\label{thm:column_wise}
	Given a level $l$ in the range $L\geqslant l \geqslant l_m$, the low-rank approximations of all the blocks $K(T_\tau, S_\nu)$ at level $l$ ($\tau$ at level $l$ of $\mathcal{T}_T$ and $\nu$ at level $(L-l)$ of $\mathcal{T}_S$) based on the computed column basis matrices $U_{\tau,\nu}$ have approximation errors bounded by
	\begin{equation}\label{eqn:thm_column}
	\sum_{\tau,\nu \text{ at level } l} \|K(T_\tau, S_\nu) - U_{\tau,\nu}U_{\tau,\nu}^T K(T_\tau, S_\nu)\|_F^2 \leqslant (L-l+1)\epsilon^2 \|K(T, S)\|_F^2,
	\end{equation}
	where the summation of $\tau$ is over the nodes at level $l$ of $\mathcal{T}_T$ and the summation of $\nu$ is over the nodes at level $(L-l)$ of $\mathcal{T}_S$. 
\end{theorem}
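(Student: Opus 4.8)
The plan is to prove \cref{eqn:thm_column} by downward induction on $l$, from the leaf level $l=L$ down to $l=l_m$.

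For the base case $l=L$, I would note that each $U_{\tau,s}$ (with $s$ the root of $\mathcal{T}_S$) is computed directly by \cref{alg:matvec_random} applied to $K(T_\tau,S_s)$, so the standing assumption gives $\|K(T_\tau,S_s)-U_{\tau,s}U_{\tau,s}^TK(T_\tau,S_s)\|_F\leqslant\epsilon\|K(T_\tau,S_s)\|_F$. Since the leaf-level blocks $\{K(T_\tau,S_s)\}_\tau$ form a row partition of $K(T,S)$, squaring and summing over $\tau$ yields $\epsilon^2\sum_\tau\|K(T_\tau,S_s)\|_F^2=\epsilon^2\|K(T,S)\|_F^2$, which is \cref{eqn:thm_column} with $L-l+1=1$.

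For the inductive step, fix $l<L$ and assume \cref{eqn:thm_column} at level $l+1$. Fix $\tau$ at level $l$ of $\mathcal{T}_T$ with children $\tau_1,\tau_2$, and $\nu$ at level $L-l$ of $\mathcal{T}_S$ with parent $p_\nu$. Set $M=K(T_\tau,S_\nu)$, $Q_a=U_{\tau_a,p_\nu}$, $Q=\mathrm{diag}(Q_1,Q_2)$, and $R=R_{\tau,\nu}$, so that \cref{eqn:nested_basis} gives $U_{\tau,\nu}=QR$ with $Q$, $R$, and hence $U_{\tau,\nu}$ having orthonormal columns. I would then split
\[
M-U_{\tau,\nu}U_{\tau,\nu}^TM=(I-QQ^T)M+Q\bigl(I-RR^T\bigr)Q^TM,
\]
the first term lying in $\text{col}(Q)^\perp$ and the second in $\text{col}(Q)$, so by the Pythagorean identity and the orthonormality of $Q$,
\[
\|M-U_{\tau,\nu}U_{\tau,\nu}^TM\|_F^2=\|(I-QQ^T)M\|_F^2+\|(I-RR^T)Q^TM\|_F^2.
\]
Now $Q^TM$ is exactly the stacked matrix $[\,E_{\tau_1,\nu}^T\;E_{\tau_2,\nu}^T\,]^T$ compressed by $R_{\tau,\nu}$ in \cref{eqn:nested_approx}, so the standing assumption bounds the second term by $\epsilon^2\|Q^TM\|_F^2\leqslant\epsilon^2\|M\|_F^2$; and since $Q$ is block diagonal and $M$ block-stacked accordingly, the first term equals $\sum_{a=1}^{2}\|K(T_{\tau_a},S_\nu)-U_{\tau_a,p_\nu}U_{\tau_a,p_\nu}^TK(T_{\tau_a},S_\nu)\|_F^2$.

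Summing over all $\tau,\nu$ at level $l$, the $R$-terms contribute at most $\epsilon^2\sum_{\tau,\nu}\|K(T_\tau,S_\nu)\|_F^2=\epsilon^2\|K(T,S)\|_F^2$ because the level-$l$ blocks tile $K(T,S)$. For the projection terms I would reindex by $\sigma:=\tau_a$ (ranging over level $l+1$ of $\mathcal{T}_T$) and $\mu:=p_\nu$ (ranging over level $L-l-1$ of $\mathcal{T}_S$); since the two children of $\mu$ partition the columns of $S_\mu$, summing the column-restricted child errors over these two children reassembles $\|K(T_\sigma,S_\mu)-U_{\sigma,\mu}U_{\sigma,\mu}^TK(T_\sigma,S_\mu)\|_F^2$, so the projection terms sum to exactly the left-hand side of \cref{eqn:thm_column} at level $l+1$, which by the inductive hypothesis is at most $(L-l)\epsilon^2\|K(T,S)\|_F^2$. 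Adding the two contributions gives the claimed bound $(L-l+1)\epsilon^2\|K(T,S)\|_F^2$.

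I expect the main obstacle to be the reindexing in the final step: one must check that $(\tau_a,p_\nu)$ is a legitimate level-$(l+1)$ index pair in the sense of the theorem and that summing the column-restricted child errors over the two children of $p_\nu$ reassembles the full block error with no double counting, so that the accumulated child errors telescope precisely into the level-$(l+1)$ quantity and close the induction. Everything else — the Pythagorean split, the identities using orthonormality of $Q$ and $R$, and the fact that the blocks at a fixed level tile $K(T,S)$ — is routine.
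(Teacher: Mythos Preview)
Your proof is correct and follows essentially the same approach as the paper: downward induction on $l$, with the inductive step carried by the Pythagorean decomposition of $M-U_{\tau,\nu}U_{\tau,\nu}^TM$ into the $(I-QQ^T)M$ piece (telescoping to the level-$(l+1)$ sum) and the $Q(I-RR^T)Q^TM$ piece (bounded by $\epsilon^2\|M\|_F^2$ via the standing assumption). The only cosmetic difference is that the paper treats sibling pairs $\nu_1,\nu_2$ together before summing, whereas you handle a single $\nu$ at a time and reassemble the sibling contributions into the full block $K(T_\sigma,S_\mu)$ during the reindexing step; your bookkeeping is arguably cleaner, and your worry about double counting is unfounded since each $\sigma$ at level $l+1$ arises exactly once as a child and the two children of $\mu$ partition $S_\mu$ columnwise.
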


\begin{proof}
	We prove the theorem by induction for level $l$ changing from $L$ to $l_m$. 
	First, for $l = L$, the column basis matrix $U_{\tau,s}$ for block $K(T_\tau, S_s)$ at level $L$ of $\mathcal{T}_T$ is directly computed by \cref{alg:matvec_random}. 
	Thus, based on the above relative error threshold assumption, the approximation error is bounded by 
	\[
	\| K(T_\tau, S_s) - U_{\tau,s} U_{\tau,s}^T K(T_\tau, S_s)\|_F^2 \leqslant \epsilon^2\|K(T_\tau, S_s)\|_F^2. 
	\]
	Moreover, summing over all nodes $\tau$ on both sides of the above inequality proves \cref{eqn:thm_column} for $l = L$. 
	
	Assume that \cref{eqn:thm_column} holds true for level $(l+1)$. 
	For each node $\tau$ at level $l$ of $\mathcal{T}_T$, let $\nu_1$ and $\nu_2$ be siblings at level $(L-l)$ of $\mathcal{T}_S$. 
	Let $\{\tau_1, \tau_2\}$ be the children of $\tau$ at level $(l+1)$ of $\mathcal{T}_S$ and $\nu$ be the parent node of $\{\nu_1, \nu_2\}$ at level $(L-l-1)$ of $\mathcal{T}_S$. 
	Note that $K(T_\tau, S_{\nu_1})$ and $K(T_\tau, S_{\nu_2})$ are compressed blocks at level $l$ while $K(T_{\tau_1}, S_\nu)$ and $K(T_{\tau_2}, S_\nu)$ are compressed blocks at level $l+1$. 
	Moreover, these two sets of blocks correspond to the same large block in $K(T, S)$, i.e., 
	\[
	\begin{bmatrix}
	K(T_\tau, S_{\nu_1}) & K(T_\tau, S_{\nu_2})
	\end{bmatrix}
	= 
	\begin{bmatrix}
	K(T_{\tau_1}, S_\nu) \\
	K(T_{\tau_2}, S_\nu)
	\end{bmatrix}
	= 
	K(T_\tau, S_\nu).
	\]
	
	Recall that $U_{\tau, \nu_1}$ is computed in a nested fashion as 
	%\begin{align*}
	%K(T_\tau, S_{\nu_1}) 
	%& \approx U_{\tau,\nu_1} U_{\tau,\nu_1}^T K(T_\tau, S_{\nu_1}) \\
	%& = \begin{bmatrix}
	%U_{\tau_1, \nu} & \\
	%& U_{\tau_2, \nu}
	%\end{bmatrix}
	%R_{\tau,\nu_1}
	%\end{align*}
	\begin{align*}
	K(T_\tau, S_{\nu_1})
	& = 
	\begin{bmatrix}
	K(T_{\tau_1}, S_{\nu_1}) \\
	K(T_{\tau_2}, S_{\nu_1})
	\end{bmatrix}
	\approx 
	\begin{bmatrix}
	U_{\tau_1, \nu} & \\
	& U_{\tau_2, \nu}
	\end{bmatrix}
	\begin{bmatrix}
	U_{\tau_1, \nu}^T K(T_{\tau_1}, S_{\nu_1}) \\
	U_{\tau_2, \nu}^T K(T_{\tau_2}, S_{\nu_1})
	\end{bmatrix}
	\\ & \approx
	\begin{bmatrix}
	U_{\tau_1, \nu} & \\
	& U_{\tau_2, \nu}
	\end{bmatrix}
	R_{\tau,\nu_1} R_{\tau, \nu_1}^T
	\begin{bmatrix}
	U_{\tau_1, \nu}^T K(T_{\tau_1}, S_{\nu_1}) \\
	U_{\tau_2, \nu}^T K(T_{\tau_2}, S_{\nu_1})
	\end{bmatrix}
	= 
	U_{\tau,\nu_1} U_{\tau,\nu_1}^T K(T_\tau, S_{\nu_1}),
	\end{align*}
	where $R_{\tau,\nu_1}$ is computed by applying \cref{alg:matvec_random} to the intermediate matrix above, i.e.,
	\begin{equation}\label{eqn:thm_approx}
	\begin{bmatrix}
	U_{\tau_1, \nu}^T K(T_{\tau_1}, S_{\nu_1}) \\
	U_{\tau_2, \nu}^T K(T_{\tau_2}, S_{\nu_1})
	\end{bmatrix}
	\approx 
	R_{\tau,\nu_1} R_{\tau, \nu_1}^T
	\begin{bmatrix}
	U_{\tau_1, \nu}^T K(T_{\tau_1}, S_{\nu_1}) \\
	U_{\tau_2, \nu}^T K(T_{\tau_2}, S_{\nu_1})
	\end{bmatrix}.
	\end{equation} 
	The approximation error of block $K(T_\tau, S_{\nu_1})$ at level $l$ can be estimated as
	\begin{align*}
	& \|K(T_\tau, S_{\nu_1}) - U_{\tau,\nu_1} U_{\tau,\nu_1}^T K(T_\tau, S_{\nu_1})\|_F^2
	\nonumber\\ & =
	\left\|K(T_\tau, S_{\nu_1}) - \begin{bmatrix}
	U_{\tau_1, \nu}& \\
	& U_{\tau_2, \nu}
	\end{bmatrix}
	\begin{bmatrix}
	U_{\tau_1, \nu}^T K(T_{\tau_1}, S_{\nu_1}) \\
	U_{\tau_2, \nu}^T K(T_{\tau_2}, S_{\nu_1})
	\end{bmatrix}\right\|_F^2  
	\nonumber\\ & \hspace{5em}+
	\left\|
	\begin{bmatrix}
	U_{\tau_1, \nu}& \\
	& U_{\tau_2, \nu}
	\end{bmatrix}
	\begin{bmatrix}
	U_{\tau_1, \nu}^T K(T_{\tau_1}, S_{\nu_1}) \\
	U_{\tau_2, \nu}^T K(T_{\tau_2}, S_{\nu_1})
	\end{bmatrix} - 
	U_{\tau,\nu_1} U_{\tau,\nu_1}^T K(T_\tau, S_{\nu_1})\right\|_F^2
	\nonumber\\ & = 
	\left\|K(T_{\tau_1}, S_{\nu_1}) \!-\! U_{\tau_1, \nu} U_{\tau_1, \nu}^TK(T_{\tau_1}, S_{\nu_1})\right\|_F^2
	+ \left\|K(T_{\tau_2}, S_{\nu_1}) - U_{\tau_2, \nu} U_{\tau_2, \nu}^TK(T_{\tau_2}, S_{\nu_1})\right\|_F^2
	\nonumber \\ & \hspace{5em} + 
	\left\|
	\begin{bmatrix}
	U_{\tau_1, \nu}^T K(T_{\tau_1}, S_{\nu_1}) \\
	U_{\tau_2, \nu}^T K(T_{\tau_2}, S_{\nu_1})
	\end{bmatrix} - 
	R_{\tau,\nu_1} R_{\tau, \nu_1}^T
	\begin{bmatrix}
	U_{\tau_1, \nu}^T K(T_{\tau_1}, S_{\nu_1}) \\
	U_{\tau_2, \nu}^T K(T_{\tau_2}, S_{\nu_1})
	\end{bmatrix}\right\|_F^2
	\end{align*}
	where the first equation is due to the orthogonality between the columns from the two bracketed terms, and the second equation follows from the unitary invariance of the Frobenius norm, as $U_{\tau_1, \nu}$ and $U_{\tau_2, \nu}$ both have orthonormal columns.
	The last term above, which is the approximation error of \cref{eqn:thm_approx}, is guaranteed to satisfy
	\[
	%\left\|\begin{bmatrix}
	%U_{\tau_1, \nu}^T K(T_{\tau_1}, S_{\nu_1}) \\
	%U_{\tau_2, \nu}^T K(T_{\tau_2}, S_{\nu_1})
	%\end{bmatrix} - 
	%R_{\tau,\nu_1} R_{\tau, \nu_1}^T
	%\begin{bmatrix}
	%U_{\tau_1, \nu}^T K(T_{\tau_1}, S_{\nu_1}) \\
	%U_{\tau_2, \nu}^T K(T_{\tau_2}, S_{\nu_1})
	%\end{bmatrix}\right\|_F^2
	%\leqslant 
	\epsilon^2
	\left\|\begin{bmatrix}
	U_{\tau_1, \nu}^T K(T_{\tau_1}, S_{\nu_1}) \\
	U_{\tau_2, \nu}^T K(T_{\tau_2}, S_{\nu_1})
	\end{bmatrix}\right\|_F^2
	\leqslant 
	\epsilon^2
	\left\|\begin{bmatrix}
	K(T_{\tau_1}, S_{\nu_1}) \\
	K(T_{\tau_2}, S_{\nu_1})
	\end{bmatrix}\right\|_F^2
	= \epsilon^2 \|K(T_\tau, S_{\nu_1})\|_F^2.
	\]
	Similarly, we can estimate the approximation error of block $K(T_\tau, S_{\nu_2})$ at level $l$ as
	\begin{align*}
	& \|K(T_\tau, S_{\nu_2}) - U_{\tau,\nu_2} U_{\tau,\nu_2}^T K(T_\tau, S_{\nu_2})\|_F^2
	\\ & \leqslant
	\left\|K(T_{\tau_1}, S_{\nu_2}) - U_{\tau_1, \nu} U_{\tau_1, \nu}^TK(T_{\tau_1}, S_{\nu_2})\right\|_F^2
	\\ & \hspace{3em} + \left\|K(T_{\tau_2}, S_{\nu_2}) - U_{\tau_2, \nu} U_{\tau_2, \nu}^TK(T_{\tau_2}, S_{\nu_2})\right\|_F^2+ 
	\epsilon^2 \|K(T_\tau, S_{\nu_2})\|_F^2.
	\end{align*}
	
	Assembling the above three inequalities and using $S_\nu = S_{\nu_1}\cup S_{\nu_2}$, we obtain
	\begin{align*}
	& \|K(T_\tau, S_{\nu_1}) - U_{\tau,\nu_1} U_{\tau,\nu_1}^T K(T_\tau, S_{\nu_1})\|_F^2 + \|K(T_\tau, S_{\nu_2}) - U_{\tau,\nu_2} U_{\tau,\nu_2}^T K(T_\tau, S_{\nu_2})\|_F^2
	\nonumber \\ & \leqslant
	\left\|K(T_{\tau_1}, S_\nu) - U_{\tau_1, \nu} U_{\tau_1, \nu}^TK(T_{\tau_1}, S_\nu)\right\|_F^2
	+ \left\|K(T_{\tau_2}, S_\nu) - U_{\tau_2, \nu} U_{\tau_2, \nu}^TK(T_{\tau_2}, S_\nu)\right\|_F^2
	\nonumber \\ & \hspace{3em}
	+ \epsilon \left(\|K(T_\tau, S_{\nu_1})\|_F^2 + \|K(T_\tau, S_{\nu_2})\|_F^2\right).
	\end{align*}
	The left hand side of the inequality is the approximation error of the two blocks at level $l$ while the first two terms on the right hand side are the approximation errors of two blocks at level $(l+1)$.
	Summing over all the nodes $\tau$ at level $l$ of $\mathcal{T}_T$ and all the node pairs $(\nu_1, \nu_2)$ at level $(L-l)$ of $\mathcal{T}_S$ on both sides of this inequality, we obtain, 
	\begin{align*}
	\hspace{2em} & \sum_{\tau, \nu \text{ at level } l} \|K(T_\tau, S_\nu) - U_{\tau,\nu}U_{\tau,\nu}^T K(T_\tau, S_\nu)\|_F^2
	\\ & \leqslant
	\sum_{\tau, \nu \text{ at level } (l+1)} \|K(T_\tau, S_\nu) - U_{\tau,\nu}U_{\tau,\nu}^T K(T_\tau, S_\nu)\|_F^2 + \epsilon^2 \|K(T, S)\|_F^2
	\\ & \leqslant (L-(l+1)+1) \epsilon^2 \|K(T, S)\|_F^2 + \epsilon^2 \|K(T, S)\|_F^2 = (L-l+1)\epsilon^2\|K(T, S)\|_F^2.
	\end{align*}
	
\end{proof}

\subsection{Approximation error for the nested basis $V_{\tau,\nu}$}
The overall approximation error for the projection to the row bases is bounded just like that of the column bases shown in \Cref{thm:row_wise}.
\begin{theorem}\label{thm:row_wise}
	Given a level l in the range $L\geqslant l \geqslant l_m$, the low-rank approximations of all the blocks $K(T_\tau, S_\nu)$ at level $l$ ($\tau$ at level $l$ of $\mathcal{T}_T$ and $\nu$ at level $(L-l)$ of $\mathcal{T}_S$) based on the computed row basis matrices $V_{\tau,\nu}$ have approximation errors bounded as
	\begin{equation}\label{eqn:thm_row}
	\sum_{\tau,\nu \text{ at level } l} \|K(T_\tau, S_\nu) - K(T_\tau, S_\nu) V_{\tau,\nu}V_{\tau,\nu}^T\|_F^2 \leqslant (l+1)\epsilon^2 \|K(T, S)\|_F^2,
	\end{equation}
	where the summation of $\tau$ is over the nodes at level $l$ of $\mathcal{T}_T$ and the summation of $\nu$ is over the nodes at level $(L-l)$ of $\mathcal{T}_S$. 
\end{theorem}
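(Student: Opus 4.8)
The plan is to obtain \Cref{thm:row_wise} from \Cref{thm:column_wise} by transposition, exploiting the fact that the row-wise construction of the $V_{\tau,\nu}$ and $W_{\tau,\nu}$ is exactly the column-wise construction of \Cref{sec:column} run on $K(T,S)^T$ with the roles of $\mathcal{T}_T$ and $\mathcal{T}_S$ interchanged (as already noted around \cref{eqn:nested_basis_col} and in \Cref{sec:compute_W}): the leaf-level bases $V_{t,\nu}$ are produced directly by \Cref{alg:matvec_random} on \cref{line:loopV}, and each nonleaf transfer matrix $W_{\tau,\nu}$ is produced by \Cref{alg:matvec_random} from the reduced matrix $\mathrm{diag}(V_{p_\tau,\nu_1}^T,V_{p_\tau,\nu_2}^T)\,K(T_\tau,S_\nu)^T$, mirroring the role of $R_{\tau,\nu}$ for $U_{\tau,\nu}$. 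Since $\|M\|_F=\|M^T\|_F$ for any matrix $M$, for every block
\[
\|K(T_\tau,S_\nu)-K(T_\tau,S_\nu)V_{\tau,\nu}V_{\tau,\nu}^T\|_F=\|K(T_\tau,S_\nu)^T-V_{\tau,\nu}V_{\tau,\nu}^T K(T_\tau,S_\nu)^T\|_F,
\]
so the left-hand side of \cref{eqn:thm_row} is precisely the column-projection error of the blocks of $K(T,S)^T$ controlled by \cref{eqn:thm_column}.

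Next I would track the level indices. A block $K(T_\tau,S_\nu)$ with $\tau$ at level $l$ of $\mathcal{T}_T$ and $\nu$ at level $L-l$ of $\mathcal{T}_S$ becomes, in the transposed picture, a block whose ``target'' node ($\nu$, in $\mathcal{T}_S$) sits at level $L-l$ and whose ``source'' node ($\tau$, in $\mathcal{T}_T$) sits at level $l=L-(L-l)$; this is exactly the configuration of \Cref{thm:column_wise} at level $L-l$ of the transposed target tree $\mathcal{T}_S$. Applying that theorem to $K(T,S)^T$ therefore gives
\[
\sum_{\tau,\nu\text{ at level }l}\|K(T_\tau,S_\nu)^T-V_{\tau,\nu}V_{\tau,\nu}^T K(T_\tau,S_\nu)^T\|_F^2\leqslant\bigl(L-(L-l)+1\bigr)\epsilon^2\|K(T,S)^T\|_F^2=(l+1)\epsilon^2\|K(T,S)\|_F^2,
\]
which is \cref{eqn:thm_row}. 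The admissible range $L\geqslant L-l\geqslant l_m$ inherited from \Cref{thm:column_wise} translates to $0\leqslant l\leqslant L-l_m$; in particular it includes the levels $l=0,1,\dots,l_m$ that assemble the hybrid factor $W^{l_m}W^{l_m-1}\cdots W^1V^0$.

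As an alternative that avoids invoking the transpose, I would rerun the induction of \Cref{thm:column_wise} in the reversed direction: the base case is level $l=0$ (root of $\mathcal{T}_T$, leaf level $L$ of $\mathcal{T}_S$), where $V_{t,\nu}$ comes from \Cref{alg:matvec_random} and gives $\|K(T_t,S_\nu)-K(T_t,S_\nu)V_{t,\nu}V_{t,\nu}^T\|_F^2\leqslant\epsilon^2\|K(T_t,S_\nu)\|_F^2$; summing over the leaf nodes $\nu$ of $\mathcal{T}_S$ yields the bound $\epsilon^2\|K(T,S)\|_F^2$. For the step from level $l$ to $l+1$ I would use the nested identity \cref{eqn:nested_basis_col}, split $K(T_\tau,S_\nu)=[\,K(T_\tau,S_{\nu_1})\ \ K(T_\tau,S_{\nu_2})\,]$ along the children $\{\nu_1,\nu_2\}$ of $\nu$, and decompose the approximation error into the two parent-level errors plus the error of the compression defining $W_{\tau,\nu}$, using the orthonormality of the columns of $\mathrm{diag}(V_{p_\tau,\nu_1},V_{p_\tau,\nu_2})$ and the unitary invariance of $\|\cdot\|_F$ — the same three-term split as in \Cref{thm:column_wise}, read through a transpose. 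Summing over $\tau$ at level $l+1$ of $\mathcal{T}_T$ and over parent pairs in $\mathcal{T}_S$ adds one more $\epsilon^2\|K(T,S)\|_F^2$ and advances the induction.

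I do not expect a substantive obstacle: the construction of the row bases is the exact mirror image of that of the column bases, so whichever route I take the content of \Cref{thm:column_wise} carries over. The only point demanding care is the bookkeeping — keeping the transposes, the child/parent relations in $\mathcal{T}_T$ versus $\mathcal{T}_S$, and the two level conventions ($l$ in $\mathcal{T}_T$ against $L-l$ in $\mathcal{T}_S$) aligned so that the final count lands on $(l+1)\epsilon^2$ rather than $(L-l+1)\epsilon^2$.
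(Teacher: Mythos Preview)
Your proposal is correct and matches the paper's approach: the paper does not spell out a separate proof of \Cref{thm:row_wise} but simply states that the row-basis bound is obtained ``just like'' the column-basis bound in \Cref{thm:column_wise}, i.e., by the transposition/symmetry argument you give first (and which your alternative direct induction merely unfolds). Your remark on the level bookkeeping is apt --- the transposed range is $0\leqslant l\leqslant L-l_m$, which covers the levels $0,\dots,l_m$ actually used in the hybrid factorization.
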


\subsection{Approximation error for the overall factorization}
Combining the above two error analyses for the column- and row-wise butterfly factorizations, the overall approximation error of a hybrid butterfly factorization at any level $l$ can be bounded as shown in \cref{thm:hybrid} 

\begin{theorem}\label{thm:hybrid}
	Given center level $l_m=\lfloor L/2\rfloor$, the low-rank approximations of all the blocks $K(T_\tau, S_\nu)$ at level $l_m$ ($\tau$ at level $l_m$ of $\mathcal{T}_T$ and $\nu$ at level $(L-l_m)$ of $\mathcal{T}_S$) based on the computed basis matrices $U_{\tau,\nu}$ and $V_{\tau,\nu}$ have an overall approximation error bounded by
	\begin{equation}\label{eqn:thm_hybrid}
	\sum_{\tau,\nu \text{ at level } l} \|K(T_\tau, S_\nu) - U_{\tau,\nu}U_{\tau,\nu}^T K(T_\tau, S_\nu) V_{\tau,\nu}V_{\tau,\nu}^T\|_F^2 \leqslant (L+2)\epsilon^2 \|K(T, S)\|_F^2,
	\end{equation}
	where the summation of $\tau$ is over the nodes at level $l_m$ of $\mathcal{T}_T$ and the summation of $\nu$ is over the nodes at level $(L-l_m)$ of $\mathcal{T}_S$. 
\end{theorem}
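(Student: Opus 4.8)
The plan is to deduce \Cref{thm:hybrid} from \Cref{thm:column_wise} and \Cref{thm:row_wise} by a blockwise orthogonal splitting, with essentially no new induction. Fix a block at level $l_m$ and abbreviate $M = K(T_\tau, S_\nu)$, $P = U_{\tau,\nu}U_{\tau,\nu}^{T}$, and $Q = V_{\tau,\nu}V_{\tau,\nu}^{T}$; since $U_{\tau,\nu}$ and $V_{\tau,\nu}$ have orthonormal columns, $P$ and $Q$ are orthogonal projectors. First I would use the identity
\[
M - P M Q = (I-P)M + P M (I-Q),
\]
which is immediate. The key point is that the two summands are orthogonal in the Frobenius inner product: $\langle (I-P)M,\; P M (I-Q)\rangle_F = \operatorname{tr}\!\big(M^{T}(I-P)P M (I-Q)\big) = 0$, because $(I-P)P = 0$. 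Hence the Pythagorean identity gives
\[
\|M - P M Q\|_F^2 = \|(I-P)M\|_F^2 + \|P M (I-Q)\|_F^2 .
\]

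Next I would discard the leading projector in the second term: since $P$ is an orthogonal projector it is non-expansive, so $\|P M (I-Q)\|_F \leqslant \|M (I-Q)\|_F$, and therefore
\[
\|M - P M Q\|_F^2 \leqslant \|M - P M\|_F^2 + \|M - M Q\|_F^2 .
\]
In words, the combined row/column projection error of each block is controlled by the sum of its column-projection error and its row-projection error.

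Finally I would sum this inequality over all blocks $K(T_\tau, S_\nu)$ at level $l_m$ ($\tau$ at level $l_m$ of $\mathcal{T}_T$, $\nu$ at level $L-l_m$ of $\mathcal{T}_S$), which form the same non-overlapping partition of $K(T,S)$ treated in \Cref{thm:column_wise} and \Cref{thm:row_wise} at $l = l_m$. Applying \Cref{thm:column_wise} to the first sum and \Cref{thm:row_wise} to the second yields a bound of $(L - l_m + 1)\epsilon^2\|K(T,S)\|_F^2 + (l_m + 1)\epsilon^2\|K(T,S)\|_F^2 = (L+2)\epsilon^2\|K(T,S)\|_F^2$, which is exactly \cref{eqn:thm_hybrid}. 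The only thing to be careful about is choosing the splitting so that the cross term vanishes and so that the surviving projector can be removed by non-expansiveness; all the quantitative content has already been established in the two earlier induction arguments, so there is no genuine obstacle.
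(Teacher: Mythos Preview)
Your proposal is correct and matches the paper's proof essentially line for line: the paper uses the same splitting $M - PMQ = (I-P)M + PM(I-Q)$, the same orthogonality/Pythagorean step, the same non-expansiveness bound $\|PM(I-Q)\|_F \leqslant \|M(I-Q)\|_F$, and then sums and invokes \Cref{thm:column_wise} and \Cref{thm:row_wise} at $l=l_m$ to obtain $(L-l_m+1)+(l_m+1)=L+2$. The only cosmetic difference is that you state the first step as an equality via the Pythagorean identity, whereas the paper writes it as an inequality.
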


\begin{proof}
	Every block $K(T_\tau, S_\nu)$ has its approximation error bounded as
	\begin{align*}
	& \|K(T_\tau, S_\nu) - U_{\tau,\nu}U_{\tau,\nu}^T K(T_\tau, S_\nu) V_{\tau,\nu}V_{\tau,\nu}^T\|_F^2 
	\\ &\leqslant
	\|K(T_\tau, S_\nu)\! - U_{\tau,\nu}U_{\tau,\nu}^T K(T_\tau, S_\nu)\|_F^2
	\\ &\hspace{3em} + \|U_{\tau,\nu}U_{\tau,\nu}^TK(T_\tau, S_\nu)\! - U_{\tau,\nu}U_{\tau,\nu}^T K(T_\tau, S_\nu) V_{\tau,\nu}V_{\tau,\nu}^T\|_F^2 
	\\ &\leqslant \|K(T_\tau, S_\nu) - U_{\tau,\nu}U_{\tau,\nu}^T K(T_\tau, S_\nu)\|_F^2 + 
	\|K(T_\tau, S_\nu) - K(T_\tau, S_\nu) V_{\tau,\nu}V_{\tau,\nu}^T\|_F^2. 
	\end{align*}
	Using \cref{eqn:thm_column,eqn:thm_row} in the above equation, inequality \cref{eqn:thm_hybrid} is proven. 
\end{proof}

\section{Parallelization}
This section outlines a distributed-memory implementation of \cref{alg:matvec_butterfly}. We first consider the task of parallelizing a butterfly-vector multiplication assuming that the butterfly factors are stored in a distributed fashion. Note that this is the dominant computational task in the proposed algorithm as the black-box multiplications on lines \ref{line:matvecV}, \ref{line:matvecW}, \ref{line:matvecU}, \ref{line:matvecR} often involve existing butterfly representations, and the explicit multiplications on lines \ref{line:matvecHalf}, \ref{line:W_compute}, \ref{line:R_compute} involve partial butterfly factors. A good parallelization strategy for these two types of multiplications therefore is paramount to the parallel implementation of the proposed randomized butterfly reconstruction scheme.

Without loss of generality, we assume that $r_{\tau,\nu}=r$ for some constant $r$, the number of butterfly levels $L$ is even, $n=r2^L$, and the number of processes $1\leq p\leq 2^L$ is a power of two. To estimate the algorithm's communication cost, we only analyze the number of messages and communication volume as we assume the time to communicate a message of size $m$ between two processes is $\alpha+\beta m$ where $\alpha$ and $\beta$ represent message latency and inverse bandwidth, respectively \cite{Hockney_1994_comm}.   

\subsection{Column/Row-wise factorization} We first describe the parallelization scheme for the column-wise butterfly factorization studied in \cite{Jack_2014_Parallel}. The parallel data layout can be descried as follows: (i) starting from level $L$ of $\mathcal{T}_T$, one process stores $2^L/p$ consecutive blocks $U_{\tau, s}$ following a 1D-row layout. (ii) At levels $l=L-1,\ldots,1$, let $\{\tau_1, \tau_2\}$ be the children of $\tau$ at level $l$ of $\mathcal{T}_T$ and $\{\nu_1, \nu_2\}$ be the children of $\nu$ at level $L-l-1$ of $\mathcal{T}_S$. Consider the \textit{combined} transfer matrix:
\begin{align}
R_{\tau,\nu}=
\begin{bmatrix}
R_{\tau_1,\nu} & \\
& R_{\tau_2,\nu} 
\end{bmatrix}
\begin{bmatrix}
R_{\tau, \nu_1}&\!\!\!\!R_{\tau, \nu_1}
\end{bmatrix}
\end{align} 
where $R_{\tau_a,\nu}$ can be replaced by $U_{\tau_a,\nu}$ if $l=L-1$. The parallelization scheme stores $R_{\tau_a,\nu}$ (or $U_{\tau_a,\nu}$ if $l=L-1$) and $R_{\tau,\nu_a}$ on the same process. (iii) At level $l=0$, the blocks $E_{t,\nu}$ and $R_{t,\nu}$ are stored on the same process. \cref{fig:parallel}(a) illustrates the data layout for a 4-level column-wise butterfly factorization using 4 processes. Note that at level $l=0$, the layout of $E_{t,\nu}$ is similar to the 1D-row layout for $U_{\tau, s}$, but in an index-reversed order.

When multiplying a (partial) butterfly stored as described above by a vector stored using the 1D-row layout, an all-to-all communication is required to convert the vector from the 1D-row layout to the index-reversed layout of $E_{t,\nu}$. To this end, each process communicates $\min\{{2^L}/{p},p-1\}$ messages of total size $r2^L/p$. For the example in \cref{fig:parallel}(a), process 0 (light green) needs to scatter the locally stored part of $\Omega$ of size $2^L/p\times r$ to all other processes before the multiplication operation with $E^0$ can take place. After that, there are $\log p$ levels requiring pair-wise exchanges of the intermediate multiplication results $R^l\ldots R^0E^0\Omega$ (e.g., after multiplication with $R_0$ and $R_1$ in \cref{fig:parallel}(a)). For each exchange operation, reductions involving messages of size $r2^L/p$ between two processes are performed (note: broadcast is conducted for multiplying the transpose $K(T,S)^T$). For example, considering the multiplication with $R_{\tau,\nu_1}$ (stored on process 0 in light green) and $R_{\tau,\nu_2}$ (stored on process 2 in dark green) in the first diagonal block of $R^0$, the local multiplication results of size $2r\times r$ require a reduction between processes 0 and 2 before the local multiplication with the first diagonal block of $R^1$ is performed. The communication costs are listed in \cref{tab:comm}. 
   
The parallelization of the row-wise butterfly factorization can be described similarly: At level $l=0$ of $\mathcal{T}_0$, $V_{t,\nu}$ is stored using the 1D-row layout; at levels $l=1,\ldots,L$, $W_{\tau,\nu_a}$ (or $V_{\tau,\nu_a}$ if $l=1$) and $W_{\tau_a,\nu}$ are stored on the same process. The communication costs for the row-wise parallelization are the same as those for the column-wise parallelization. 

\subsection{Hybrid factorization} Since \cref{alg:matvec_butterfly} computes a hybrid factorization, it is more convenient and efficient to combine the column-wise and row-wise parallel data layouts. Specifically, the factors $U^LR^{L-1}R^{L-2}\ldots R^{l_m}$ are stored using the column-wise layout whereas those of $W^{l_m}W^{l_m-1}\ldots W^1V^0$ adhere to the row-wise layout. The block $B_{\tau,\nu}$ is handled by the same process as $W_{\tau,\nu}$ in $W^{l_m}$. When multiplying the (partial) butterfly with a vector $\Omega$, all-to-all communication is needed for the intermediate result $B^{l_m}W^{l_m}W^{l_m-1}\ldots W^1V^0\Omega$. In contrast to the column/row-wise layout, the number of levels requiring pair-wise exchange is only $\max\{0,\log {p^2}/{2^L}\}$. Note that no exchange is needed if $p\leq 2^{L/2}$. As an example, \cref{fig:parallel}(b) shows the multiplication of a 4-level hybrid butterfly using 4 processes. Here, $p= 2^{L/2}$ and the only communication is the all-to-all operation that occurs after multiplying with $B^2$ to switch from the row- to the column-wise layout. \cref{tab:comm} lists the communication costs for the hybrid data layout. Clearly, the multiplication using the hybrid factorization requires less communication than multiplication with the column/row-wise factorization. 

Now we can summarize the proposed parallelization strategy for \cref{alg:matvec_butterfly} as follows:
\begin{itemize}
	\item The black-box multiplications on lines \ref{line:matvecV}, \ref{line:matvecW}, \ref{line:matvecU}, \ref{line:matvecR} follow the hybrid layout if they involve existing parallel butterfly representations. 
	\item The computed butterfly factors follow the hybrid layout for each process that applies \cref{alg:matvec_random} on lines \ref{line:W_compute} and \ref{line:R_compute} locally for the blocks it is in charge of. 
	\item The explicit multiplications with computed butterfly blocks on lines \ref{line:matvecHalf}, \ref{line:W_compute}, \ref{line:R_compute} follow the hybrid layout but may not involve all the processes.    
\end{itemize}

\begin{figure}[htbp!]
	\begin{center}
		\begin{tabular}{c}
			\includegraphics[height=2in]{./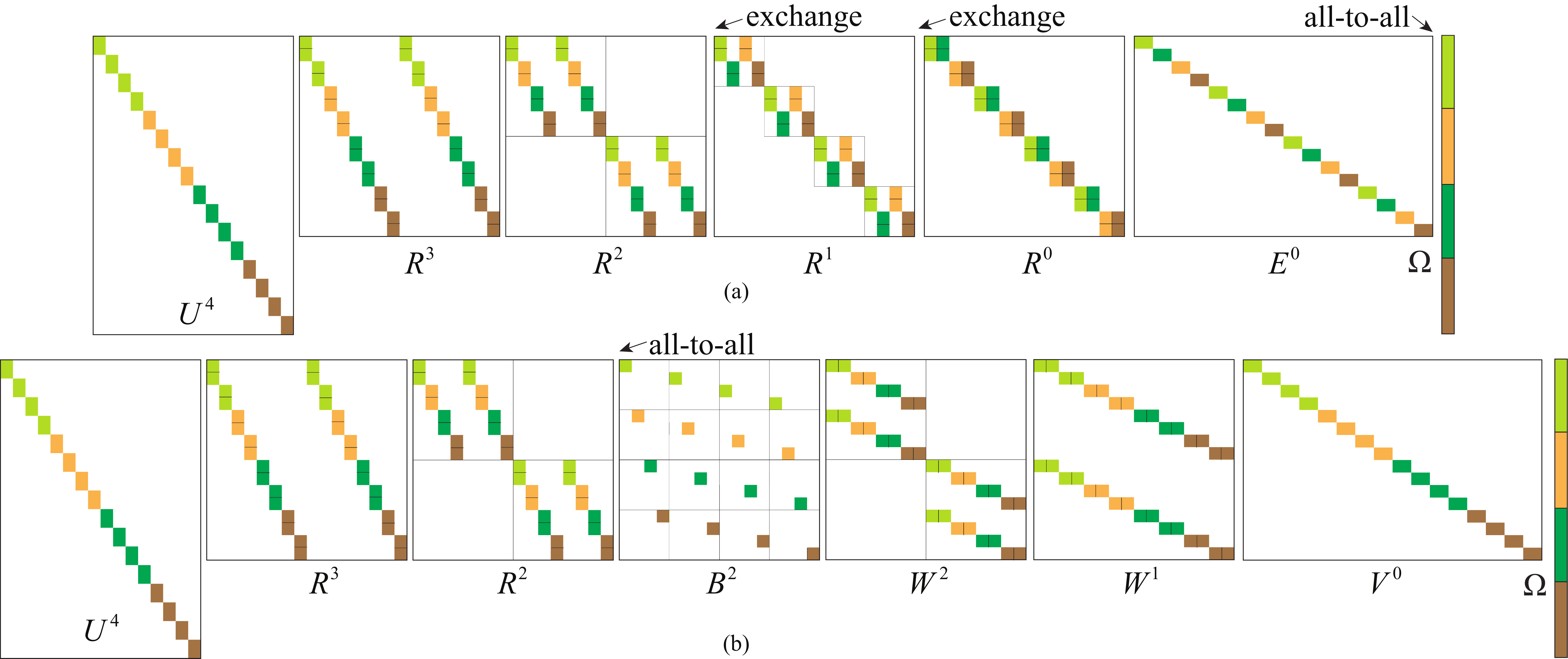}
		\end{tabular}
	\end{center}
	\caption{Parallel data layout for distributing a 4-level (a) column-wise and (b) hybrid factorization with 4 processes. Each color represents one process. The arrows denote places where exchange and all-to-all communications are needed for multiplying the butterfly with a 1D row distributed vector. Note that no exchange is needed for the hybrid butterfly.} 
	\label{fig:parallel}
\end{figure}

\begin{table*}[!tp]
	\centering	
	\begin{tabular}{|c|c|c|c|c|}
		\hline
		 & \multicolumn{2}{c|}{exchange}  & \multicolumn{2}{c|}{all-to-all} \\		
		\hline
		&volume & message count & volume & message count \\
		\hline		
		column/row	&$\frac{r2^L\log p}{p}$	& $\log p$ & $\frac{r2^L}{p}$ & $\min\{\frac{2^L}{p},p-1\}$ \\
		\hline
		hybrid &$\frac{r2^L}{p}\max\{0,\log \frac{p^2}{2^L}\}$	& $\max\{0,\log \frac{p^2}{2^L}\}$ & $\frac{r2^L}{p}$ & $\min\{\frac{2^L}{p},p-1\}$ \\ 
		\hline
	\end{tabular}
	\caption{Communication volume and message counts for one matrix-vector multiplication.}\label{tab:comm}
\end{table*}

\section{Numerical results}
This section provides several examples to demonstrate the accuracy and efficiency of the proposed randomized algorithm. The accuracy of the proposed algorithms is characterized by 
\begin{align}\label{eqn:error}
error=\frac{\left\lVert A\Omega-\big(U^LR^{L-1}R^{L-2}\ldots R^{l}\big)B^{l}\big(W^lW^{l-1}\ldots W^1V^0\big)\Omega \right\rVert_F}{\left\lVert A\Omega \right\rVert_F}
\end{align}
with a random testing matrix $\Omega$ of 16 columns. All experiments are performed on the Cori Haswell machine at NERSC, which is a Cray XC40 system and consists of 2388 dual-socket nodes with Intel Xeon E5-2698v3 processors running 16 cores per socket. The nodes are configured with 128 GB of DDR4 memory clocked at 2133 MHz. Unless stated otherwise, all experiments use one Cori node. 

\subsection{Exact butterfly factorization}
We first apply the algorithm to ``recover" a matrix with known butterfly factorization. In what follows, we use the symbol ``$\bar{a}$" to differentiate the blocks and factors of the known butterfly from the computed ones. Let $A$ be a $n\times n$ matrix with a given $L$-level butterfly factorization $A=(\bar{U}^L\bar{R}^{L-1}\bar{R}^{L-2}\ldots \bar{R}^{l})\bar{B}^{l}(\bar{W}^l\bar{W}^{l-1}\ldots \bar{W}^1\bar{V}^0)$ that has $r_{\tau,\nu}=r$ for all blocks at all levels for some constant $r$. The blocks $\bar{W}_{\tau,\nu}$, $\bar{R}_{\tau,\nu}$, and $\bar{B}_{\tau,\nu}$ have dimensions $r\times2r$, $2r\times r$ and $r\times r$, respectively. We choose the matrix dimension $n=2^{L+3} $ such that $\bar{U}_{\tau,s}$ and $\bar{V}_{t,\nu}$ have dimensions $8\times r$. The blocks $\bar{U}_{\tau,s}$, $\bar{V}_{t,\nu}$, $\bar{W}_{\tau,\nu}$, $\bar{R}_{\tau,\nu}$ are constructed as random unitary matrices, and the entries of $\bar{B}_{\tau,\nu}$ are random variables which are independent and identically distributed, following a normal distribution. We use this explicit representation to perform ``black-box" matrix-vector multiplications and apply \cref{alg:matvec_butterfly} to retrieve the butterfly factorization. 

The memory costs for varying $n$ with fixed $r=8$ when using the proposed algorithm and the randomized SVD-based reference algorithm in \cite{li_butterfly_2015} are plotted in Figure \ref{fig:complexity_random}(a). We set $p=2$ and $r_0=4$ in \cref{alg:matvec_butterfly}. The proposed algorithm requires only $O(n{\rm log}n)$ memory as opposed to the reference algorithm, which requires $O(n^{1.5} )$ memory. 

Next, the performance of the proposed parallelization scheme is demonstrated by applying \cref{alg:matvec_butterfly} to the butterfly representation with $n=\num{2.56E+6}$ and $r=10$. \cref{fig:complexity_random}(b) plots the runtime of \cref{alg:matvec_butterfly} for process counts 16 to 2048. The runtime for a single matrix-vector multiplication when the input butterfly representation follows hybrid and column-wise data layouts is also shown. As predicted by \cref{tab:comm}, the hybrid data layout yields a substantially lower communication cost.
\begin{figure}[htbp!]
	\begin{center}
		\begin{tabular}{c c}
			\includegraphics[height=1.5in]{./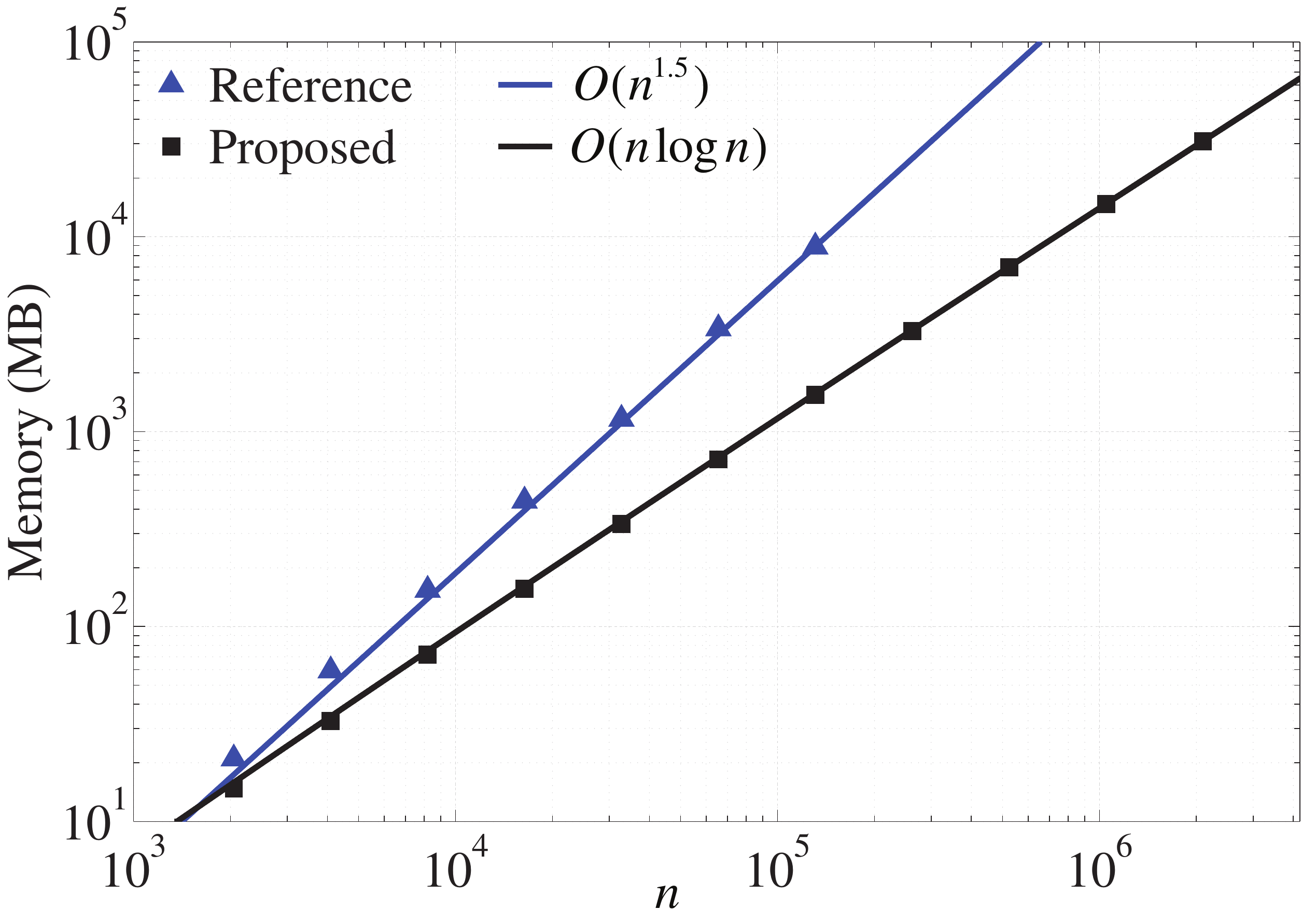} &\includegraphics[height=1.5in]{./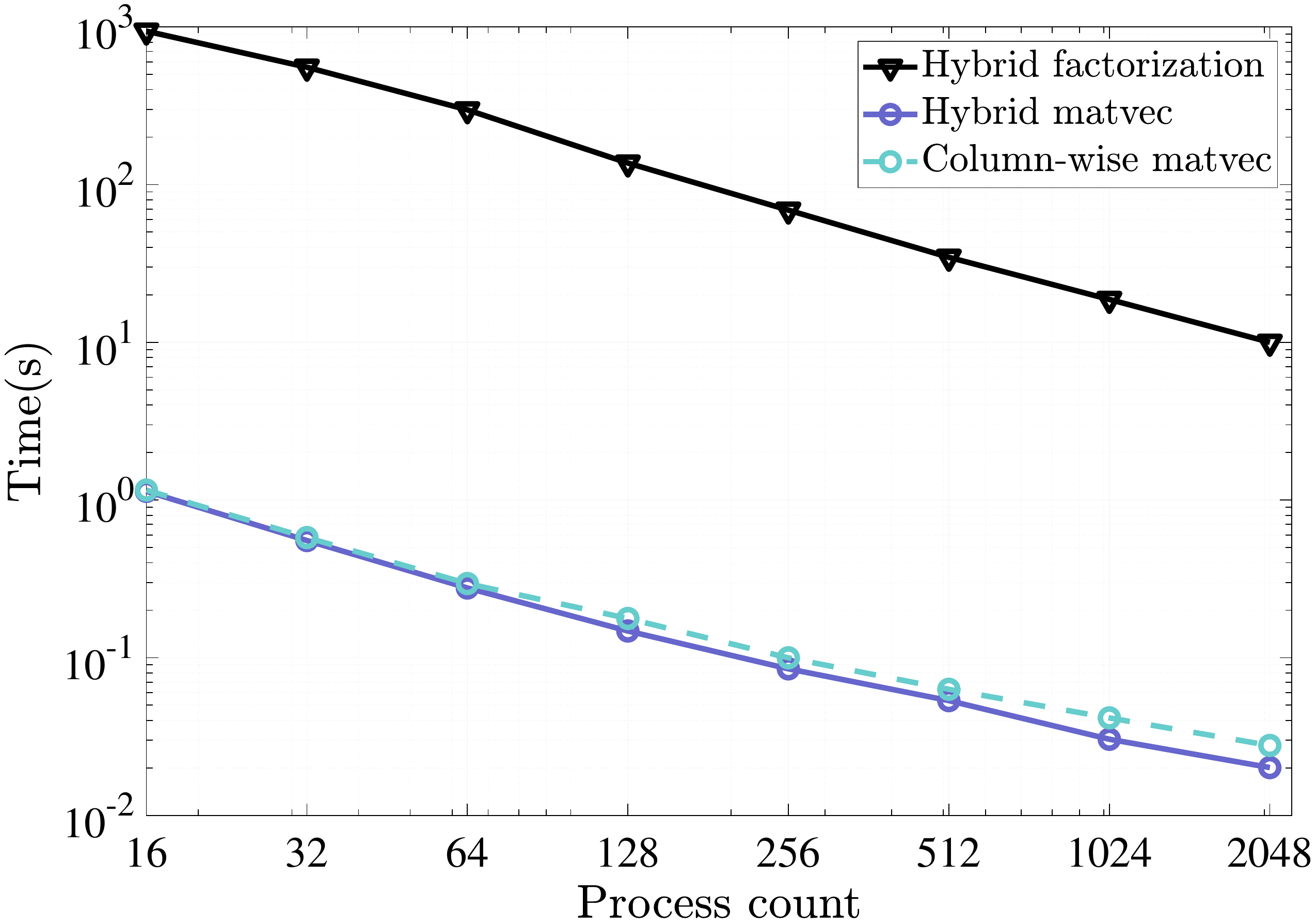}
		\end{tabular}
	\end{center}
	\caption{(a) Memory for applying \cref{alg:matvec_butterfly} and the reference algorithms to an exact butterfly representation. (b) Runtime for applying \cref{alg:matvec_butterfly} and matrix-vector multiplications to the exact butterfly representation for varying process counts.} 
	\label{fig:complexity_random}
\end{figure}

%\begin{table*}[!tp]
%	\centering	
%	\begin{tabular}{|c|c|c|c|c|c|c|}
%		\hline
%		$n$ & $L$ & $r$ & $\epsilon$ & $T_f$ (sec) & $T_a$ (sec) & Memory (MB)\\		
%		\hline
%		8192	&10&	4	&6.42E-14	&2.64E+00	&8.97E-03	&2.00E+01 \\	
%		\hline
%		32768	&12&	4	&6.52E-14	&2.36E+01	&4.54E-02	&9.22E+01 \\
%		\hline
%		131072	&14	&4	&2.78E-14	&2.05E+02	&2.18E-01	&4.18E+02 \\
%		\hline
%		524288	&16	&4	&7.01E-14	&1.83E+03	&1.06E+00	&1.87E+03 \\
%		\hline
%		2097152	&18	&4	&5.09E-14	&1.57E+04	&4.89E+00	&8.26E+03 \\
%		\hline
%		8192	&10	&6	&2.56E-14	&4.61E+00	&1.14E-02	&4.19E+01 \\
%		\hline
%		32768	&12	&6	&5.57E-14	&3.99E+01	&5.56E-02	&1.95E+02 \\
%		\hline
%		131072	&14	&6	&8.45E-14	&3.58E+02	&2.71E-01	&8.91E+02 \\
%		\hline
%		524288	&16	&6	&9.83E-14	&3.13E+03	&1.25E+00	&4.01E+03 \\
%		\hline
%		2097152	&18	&6	&1.14E-13	&2.91E+04	&6.79E+00	&1.78E+04 \\
%		\hline
%		8192	&10	&8	&6.57E-14	&8.35E+00	&1.27E-02	&7.17E+01 \\
%		\hline
%		32768	&12	&8	&2.19E-13	&7.28E+01	&6.26E-02	&3.36E+02 \\
%		\hline
%		131072	&14	&8	&1.79E-13	&6.67E+02	&3.05E-01	&1.54E+03 \\
%		\hline
%		524288	&16	&8	&1.51E-13	&5.81E+03	&1.47E+00	&6.95E+03 \\
%		\hline
%		2097152	&18	&8	&1.74E-13	&5.43E+04	&7.29E+00	&3.09E+04 \\
%		\hline
%	\end{tabular}
%	\caption{Comparisons between proposed BACA, H-BACA algorithms and existing ACA algorithms. Note that the algorithms show increasing robustness from left to right.}\label{tab:acacompare}
%\end{table*} 

\subsection{2D Helmholtz kernel}

\noindent Next, consider the following wave scattering example. Let $C^{1} $ and $C^{2} $ denote two disjoint curves. Suppose $C^{1} $ and $C^{2} $ are partitioned into $n$ and $m$ constant-sized segments $C_{i}^{1} $ and $C_{j}^{2} $, $i=1,...,n$, $j=1,...,m$. Let $k_{0} $ denote the wavenumber, then $m,n=O(k_{0} )$. Consider the following $m\times n$ matrix $A$ 
\begin{equation} \label{eqn:smat} 
A=Z^{21} (Z^{11} )^{-1}  
\end{equation} 
\begin{equation} \label{5.3)} 
Z_{i,j}^{11} =\int _{C_{j}^{1} } H_{0}^{(2)} (k_{0} |\rho _{i}^{1} -\rho |)d\rho ,\; i,j=1,...,n 
\end{equation} 
\begin{equation} \label{5.4)} 
Z_{i,j}^{21} =\int _{C_{j}^{1} } H_{0}^{(2)} (k_{0} |\rho _{i}^{2} -\rho |)d\rho ,\; i=1,...,m,\; j=1,...,n 
\end{equation} 
Here, $H_{0}^{2} (\cdot )$ is the zeroth-order Hankel function of the second kind, $\rho $ is a position vector on $C^{1} $, $\rho _{i}^{1} $ and $\rho _{i}^{2} $ denote the center of segment $C_{i}^{1} $ and $C_{i}^{2} $. In principle, the $n\times n$ matrix $(Z^{11} )^{-1} $ relates the equivalent source on $C^{1} $ to the incident fields on $C^{1} $, and the $m\times n$ matrix $Z^{21} $ computes fields observed on $C^{2} $ scattered by the source on $C^{1} $. The matrix $A$ in (\ref{eqn:smat}) resembles the \textit{scattering matrix} as it relates the incident fields on $C^{1} $ to its scattered fields on $C^{2} $ \cite{Hao_2015_scatteringmatrix,Yang_2017_HODBF}. In what follows, we seek a butterfly-compressed representation of the matrix $A$. 

In this example, suppose $C^{1} $ and $C^{2} $ are two parallel lines with length $L$ and their respective distance $L$. It is well known that $Z^{11} $ and its inverse have compressed representations using $\mathcal{H}$-matrix or other low-rank factorization-based hierarchical techniques, requiring at most $O(n{\rm log}n)$ computation and memory resources \cite{Chandrasekaran_2007_HSS,Hackbusch_2003_Hmatrix,martinsson_fast_2005}. In addition, $Z^{21} $ can be compressed by the butterfly factorization requiring $O(n{\rm log}n)$ computation and memory resources \cite{Yang_2017_HODBF,Yang_2020_BFprecondition}. Therefore, $A$ and its transpose can be applied to any vector in $O(n{\rm log}n)$ operations irrespective of wavenumber $k_{0} $.

To compute a butterfly-compressed representation of $A$ using the proposed algorithm, the lengths of line segments $C_{i}^{1} $ and $C_{i}^{2} $ are set to approximately $0.05\lambda $ with $\lambda =2\pi /k_{0} $ denoting the wavelength. The sizes of the leaf-level point sets $T_{\tau} $ and $S_{\nu}$ are set to approximately 39. The matrices $(Z^{11} )^{-1} $ and $Z^{21} $ are compressed respectively with the $\mathcal{H}$-matrix and butterfly factorization with a high accuracy (tolerance \num{E-8}), respectively. Note that these compressed representations are used instead of $A$ in \cref{eqn:error} and in black-box multiplications. 
\begin{table*}[!tp]
	\centering	
	\begin{tabular}{|c|c|c|c|c|c|c|}
		\hline
		$n$ & $L$ &$\epsilon$ & $r$ & error & Time (sec) & Memory (MB)\\		
		\hline
		\hline
		20000	&9	&1E-03 &10	&3.63E-04 	 &3.61E+01	 &1.68E+01 \\
		\hline
		80000	&11	&1E-03 &10	 &4.22E-04 	 &3.27E+02	 &7.71E+01 \\
		\hline
		320000	&13	&1E-03 &10	 &4.98E-04   &3.27E+03	 &3.47E+02 \\
		\hline
		1280000	&15	&1E-03 &10	 &7.38E-04 	 &3.02E+04	 &1.58E+03 \\
		\hline
		\hline
		20000	&9	&1E-04 &12	 &2.45E-05 	 &7.80E+01	 &3.13E+01 \\
		\hline
		80000	&11	&1E-04 &12	 &2.39E-05 	 &7.21E+02	 &1.32E+02 \\
		\hline
		320000	&13	&1E-04 &12	 &2.37E-05 	 &6.46E+03	 &5.99E+02 \\
		\hline
		1280000	&15	&1E-04 &12	 &3.26E-05 	 &5.87E+04	 &2.64E+03 \\
		\hline
		\hline
		20000	&9	&1E-05 &14	 &8.09E-06 	 &8.10E+01	 &3.71E+01 \\
		\hline
		80000	&11	&1E-05 &14	 &8.39E-06 	 &7.82E+02	 &1.71E+02 \\
		\hline
		320000	&13	&1E-05 &14	 &9.11E-06 	 &7.17E+03	 &7.92E+02 \\
		\hline
		1280000	&15	&1E-05 &14	 &9.57E-06 	 &6.17E+04	 &3.56E+03 \\
		\hline
	\end{tabular}
	\caption{Time, memory and measured error for computing a hybrid factorization of \cref{eqn:smat} using the proposed algorithm with varying matrix size $n$ and tolerance $\epsilon$.}\label{tab:Tmat}
\end{table*} 

\begin{figure}[htbp!]
	\begin{center}
		\begin{tabular}{c c}
			\includegraphics[height=1.5in]{./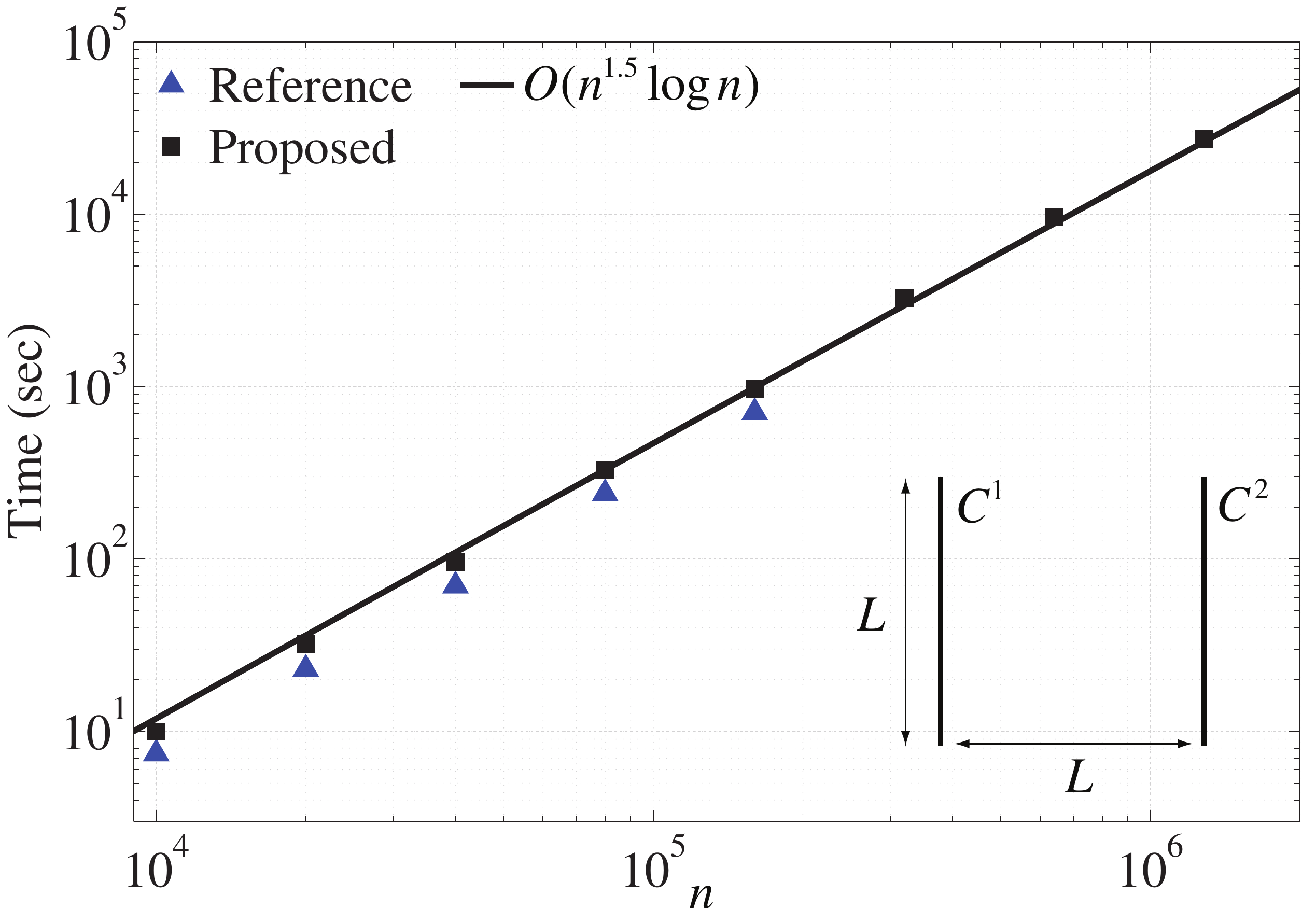} &\includegraphics[height=1.5in]{./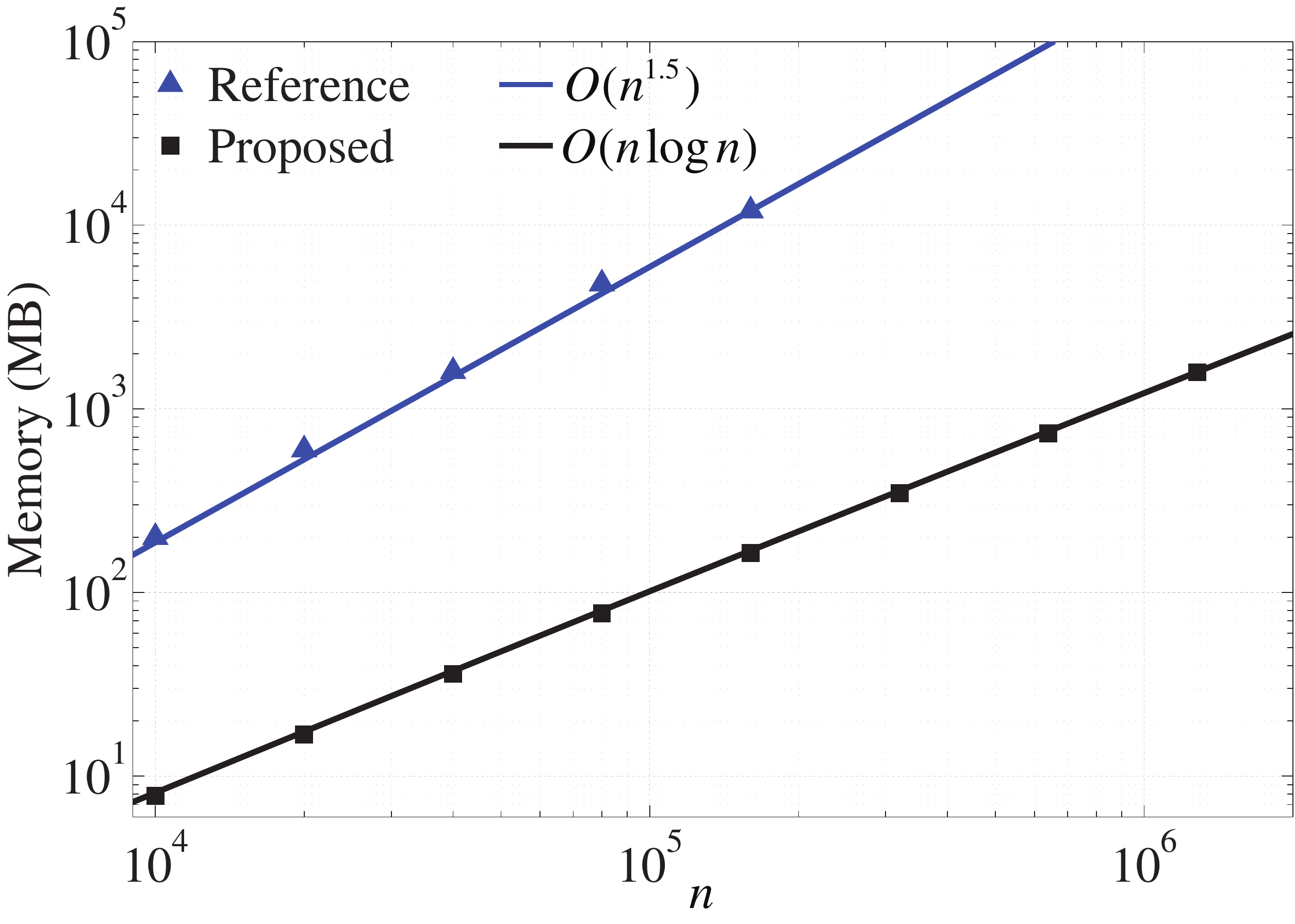}
		\end{tabular}
	\end{center}
	\caption{(a) Computation time and (b) memory for applying \cref{alg:matvec_butterfly} and the reference algorithms to \cref{eqn:smat}. } 
	\label{fig:complexity_Tmat}
\end{figure}
 
The computational results with butterfly level $L=9,11,13,15$ for different tolerances $\epsilon=\num{e-3},\num{e-4},\num{e-5}$ are listed in \cref{tab:Tmat}. We set $p=2$ and $r_0=4$ in \cref{alg:matvec_butterfly}. The measured error scales proportional to $L$ as predicted in \cref{thm:hybrid}. For $\epsilon=\num{E-3}$, the computationally most expensive case (when $n=\num{1.28E6}$) requires about 9 h CPU time and 1.6 GB memory. Moreover, the memory cost and the factorization time obey the predicted $O(n{\rm log}n)$ and $O(n^{1.5} {\rm log}n)$ scaling estimates.

The computation time and memory cost with $l=8,...,15$ and $\epsilon=\num{E-3}$ are plotted in \cref{fig:complexity_Tmat} using the proposed algorithm and the reference algorithm in [10]. The proposed algorithm is slightly slower than the reference algorithm as it requires slightly more testing vectors to reduce the memory cost. That said, the proposed algorithm requires much less memory than the reference algorithm (see \cref{fig:complexity_Tmat}(b)).

\subsection{3D Helmholtz kernel}
\noindent Finally, we consider the wave interactions between two semi-sphere surfaces $C^{1}$ and $C^{2} $ of unit radius adjacent to each other. Each semi-sphere is discretized via the Nyström method into $n$ sample points. We seek a butterfly factorization of the following $n\times n$ matrix $A$:
\begin{align}
A_{i,j} = \frac{e^{i2\pi \kappa |\rho_i-\rho_j|}}{|\rho_i-\rho_j|}\label{eqn:3dkernel} 
\end{align} 
with $\rho_i$ and $\rho_j$ denoting sample point $i$ and $j$ on $C^{1}$ and $C^{2}$, respectively. The wavenumber $\kappa$ is set such that $n=50\kappa^2/\pi$ represents approximately 10 sample points per wavelength. We first compute $A\approx(\bar{U}^L\bar{R}^{L-1}\bar{R}^{L-2}\ldots \bar{R}^{l})\bar{B}^{l}(\bar{W}^l\bar{W}^{l-1}\ldots \bar{W}^1\bar{V}^0)$ with $\epsilon=\num{E-06}$ and use the result for \cref{eqn:error} and the black-box multiplications.

The computational results with butterfly level $L=8,10,12,14$ for different tolerances $\epsilon=\num{e-2},\num{e-3},\num{e-4}$ are listed in \cref{tab:EM3D}. We set $p=4$ and $r_0=64$ in \cref{alg:matvec_butterfly}. These experiments use 2 Cori nodes with a total of 64 MPI processes. The proposed algorithm achieves the desired accuracies predicted by \cref{thm:hybrid}. The computation time, memory and observed rank with $\epsilon=\num{E-2}$ are plotted in \cref{fig:complexity_EM3D} using the proposed algorithm. Despite the $O(n^{0.25})$ rank scaling (see \cref{fig:2d3d}(b) as an illustration), \cref{alg:matvec_butterfly} still attains $O(n^{1.5}{\rm log}n)$ computation and $O(n{\rm log}n)$ memory complexities as estimated in \cref{sec:incr}. In addition, the estimated memory usage with the reference algorithm  is also plotted in \cref{fig:complexity_EM3D}(b), the proposed algorithm requires much less memory in comparison.

\section{Conclusion and discussion}
\label{sec:conclusion}
This paper presented a fast and memory-efficient randomized algorithm for computing the butterfly factorization of a matrix assuming the availability of a black-box algorithm for applying the matrix and its transpose to a vector. The proposed algorithm applies the matrix and its transpose to structured random vectors to reconstruct the orthonormal row and column bases of judiciously-selected low-rank blocks of the (assumed) butterfly-compressible matrix. The algorithm only requires $O(n^{1.5}\log n)$ computation and $O(n\log n)$ storage resources for matrices arising from the integral equation based discretization of both 2D and 3D Helmholtz problems using either weak or strong admissibility separation criteria. The accuracy of the proposed algorithm only weakly depends on the number of butterfly levels. The computation time of the algorithm can be reduced leveraging distributed-memory parallelism. We expect that the proposed algorithm will play an important role in constructing both dense and sparse, fast and parallel, hierarchical matrix-based direct solvers for high-frequency wave equations. The code described here is part of the Fortran/C++ solver package ButterflyPACK (\url{https://github.com/liuyangzhuan/ButterflyPACK}), freely available online. The integration of butterfly factorizations into the sparse solver package STRUMPACK is currently in progress. 

\section*{Acknowledgments} This research was supported in part by the Exascale Computing Project (17-SC-20-SC), a collaborative effort of the U.S. Department of Energy Office of Science and the National Nuclear Security Administration, and in part by the U.S. Department of Energy, Office of Science, Office of Advanced Scientific Computing Research, Scientific Discovery through Advanced Computing (SciDAC) program through the FASTMath Institute under Contract No. DE-AC02-05CH11231 at Lawrence Berkeley National Laboratory.

This research used resources of the National Energy Research Scientific Computing Center (NERSC), a U.S. Department of Energy Office of Science User Facility operated under Contract No. DE-AC02-05CH11231.

\begin{table*}[!tp]
	\centering	
	\begin{tabular}{|c|c|c|c|c|c|c|}
		\hline	
		$n$ & $L$ &$\epsilon$ & $r$ & error & Time (sec) & Memory (MB)\\		
		\hline
		\hline			
		23806	&8	&1E-02 &85	&1.10E-02 	 &8.03E-01	 &3.78E+01 \\
		\hline
		94024	&10	&1E-02 &122	&1.23E-02 	 &5.69E+00	 &1.56E+02 \\
		\hline
		379065	&12	&1E-02 &173	&1.53E-02 	 &4.78E+01	 &6.60E+02 \\		
		\hline
		1510734	&14	&1E-02 &250	&1.62E-02 	 &3.67E+02	 &2.82E+03 \\	
		\hline
		\hline
		23806	&8	&1E-03 &136	&1.07E-03 	 &1.59E+00	 &7.82E+01 \\	
		\hline	
		94024	&10	&1E-03 &190	&1.25E-03 	 &1.04E+01	 &3.23E+02 \\	
		\hline
		379065	&12	&1E-03 &277	&1.46E-03 	 &8.79E+01	 &1.37E+03 \\	
		\hline
		1510734	&14	&1E-03 &408	&1.59E-03 	 &6.83E+02	 &5.90E+03 \\	
		\hline
		\hline	
		23806	&8	&1E-04 &175	&7.93E-05 	 &2.53E+00	 &1.32E+02 \\	
		\hline	
		94024	&10	&1E-04 &250	&8.85E-05 	 &1.71E+01	 &5.56E+02 \\	
		\hline
		379065	&12	&1E-04 &356	&1.01E-04 	 &1.56E+02	 &2.39E+03 \\	
		\hline
		1510734	&14	&1E-04 &522	&1.12E-04 	 &1.47E+03	 &1.04E+04 \\	
		\hline											
	\end{tabular}
	\caption{Time, memory and measured error for computing a hybrid factorization of \cref{eqn:3dkernel} using the proposed algorithm with varying matrix size $n$ and tolerance $\epsilon$.}\label{tab:EM3D}
\end{table*} 

\begin{figure}[htbp!]
	\begin{center}
		\begin{tabular}{c c}
			\includegraphics[height=1.5in]{./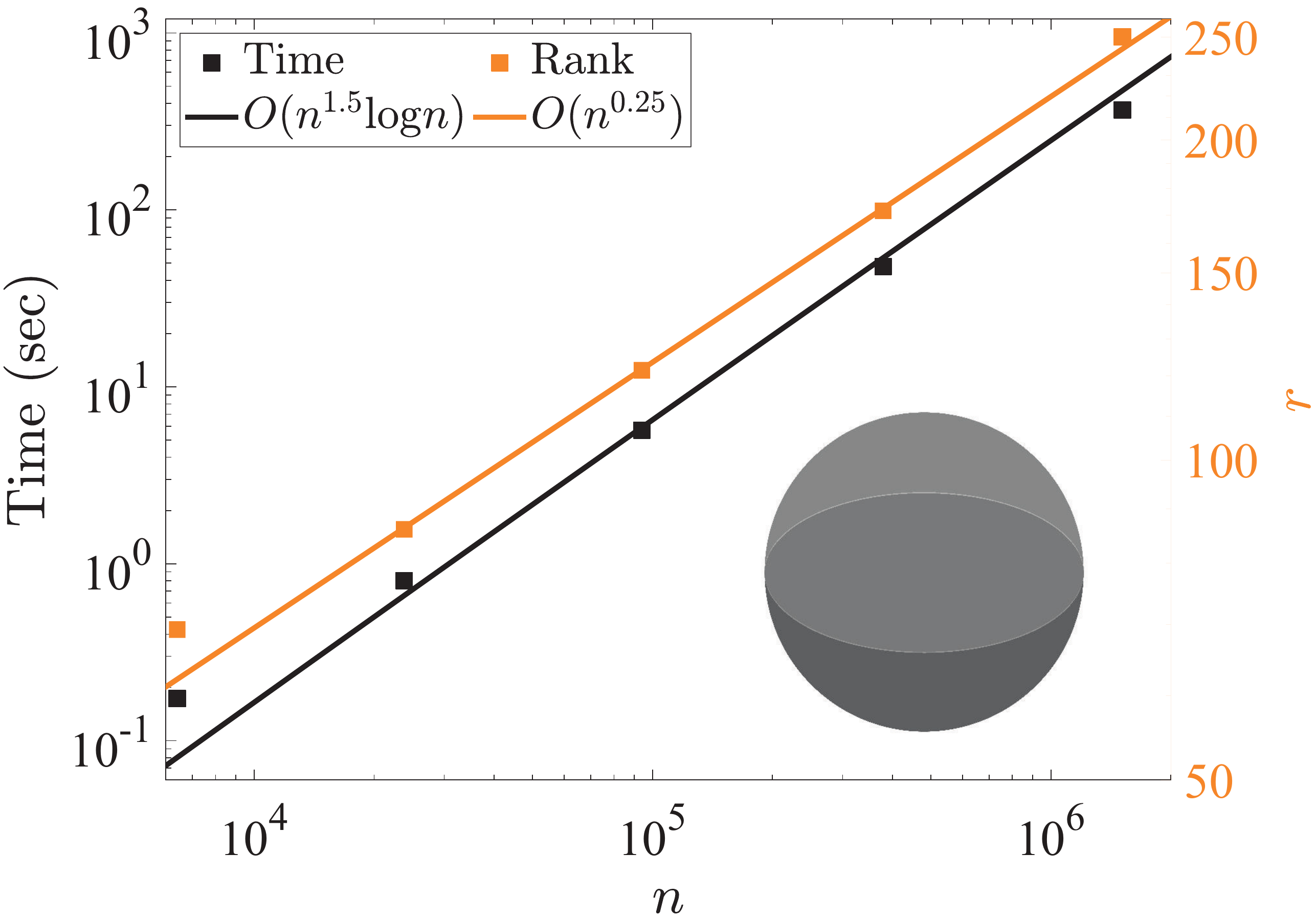} &\includegraphics[height=1.5in]{./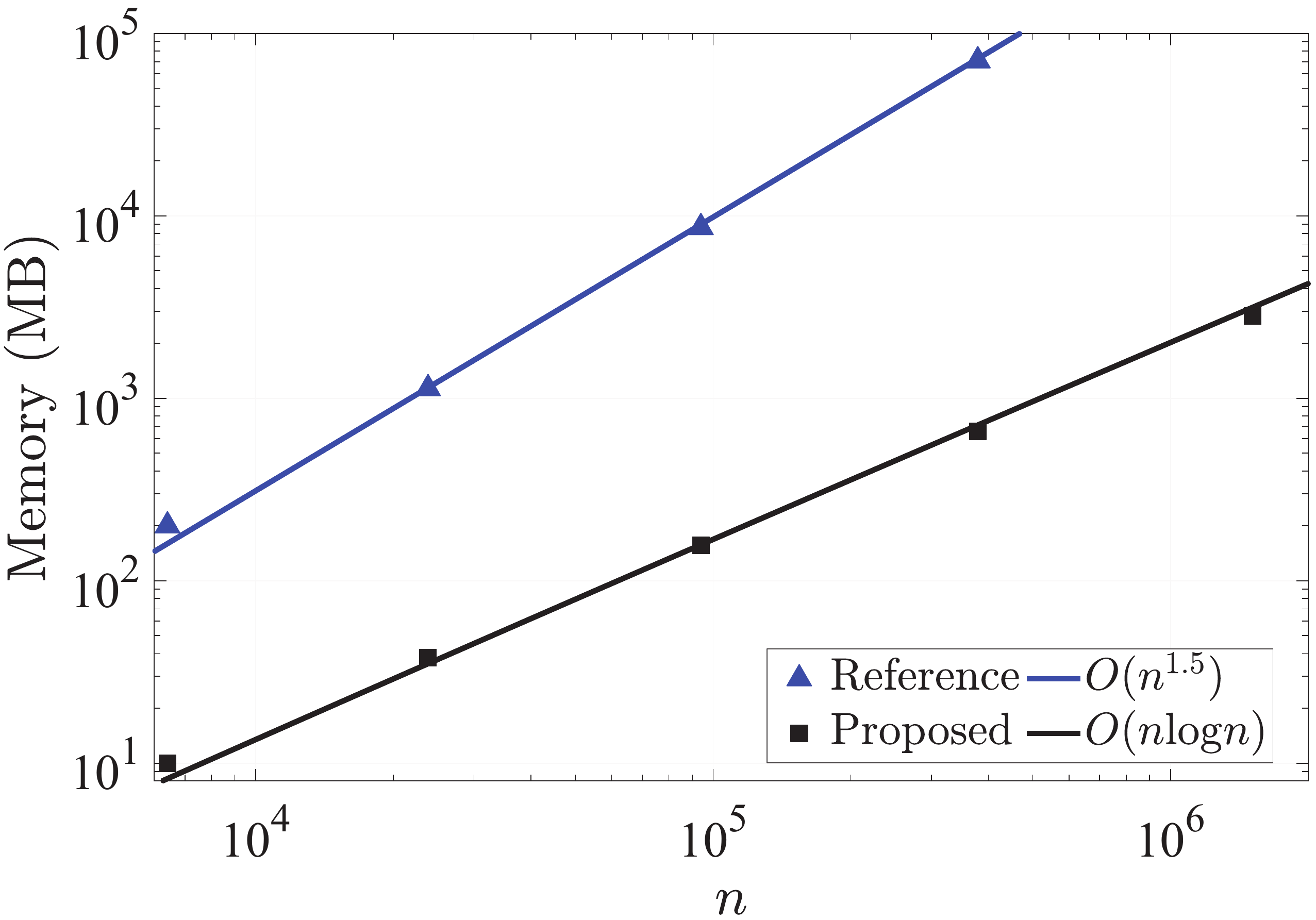}
		\end{tabular}
	\end{center}
	\caption{(a) Computation time (left y-axis), observed rank (right y-axis) and (b) memory for applying \cref{alg:matvec_butterfly} and the reference algorithms to \cref{eqn:3dkernel}. } 
	\label{fig:complexity_EM3D}
\end{figure}

\bibliographystyle{siamplain}
\bibliography{references}

\end{document}